\newcommand\ackname{Acknowledgements}
  \newenvironment{acknowledgements}{%
      \titlepage
      \null\vfil
      \@beginparpenalty\@lowpenalty
      \begin{center}%
        \bfseries \ackname
        \@endparpenalty\@M
      \end{center}}%
     {\par\vfil\null\endtitlepage}
  \newenvironment{acknowledgements}{%
      \if@twocolumn
        \section*{\abstractname}%
      \else
        \small
        \begin{center}%
          {\bfseries \ackname\vspace{-.5em}\vspace{\z@}}%
        \end{center}%
        \quotation
      \fi}
      {\if@twocolumn\else\endquotation\fi}
\newtheorem{thm}{Theorem}[section]
\newtheorem{cor}[thm]{Corollary}
\newtheorem{lemma}[thm]{Lemma}
\theoremstyle{definition}
\newtheorem{definition}[thm]{Definition}
\newtheorem{exmps}[thm]{Examples}
\theoremstyle{remark}
\newtheorem{remark}[thm]{Remark}
\newtheorem{corollary}[thm]{Corollary}
\newcommand{\finsub}[0]{{\subseteq}_{\text{\sf{fin}}}}
\let\c@equation\c@thm
\numberwithin{equation}{section}
\title{Near-linear algebra}
\author{Sophie Marques and Daniella Moore}
\begin{document}

\maketitle
\begin{center}
\rm e-mail: smarques@sun.ac.za

\it
Department of Mathematical Sciences, 
University of Stellenbosch, \\
Stellenbosch, 7600, 
South Africa\\
\&
NITheCS (National Institute for Theoretical and Computational Sciences), \\
South Africa \\ \bigskip

\rm e-mail: 21321264@sun.ac.za

\it
Department of Mathematical Sciences, 
University of Stellenbosch, \\
Stellenbosch, 7600, 
South Africa\\
\end{center} 

\begin{abstract}
In this paper, we demonstrate that the realm of near-vector spaces enables us to address non-linear problems while also providing access to most of the tools that linear algebra offers. We establish fundamental results for near-vector spaces, which serve to extend classical linear algebra into the realm of near-linear algebra. Within this paper, we finalize the algebraic proof that for a given scalar group $F$, any non-empty $F$-subspace that remains stable under addition and scalar multiplication constitutes an $F$-subspace. We prove that any quotient of a near-vector space by an $F$-subspace is itself a near-vector space, along with presenting the First Isomorphism Theorem for near-vector spaces. In doing so, we obtain comprehensive descriptions of the span. By defining linear independence outside the quasi-kernel, we introduce a new concept of basis. We also establish that near-vector spaces are characterized based on the presence of a scalar basis.  \\ \bigskip

\noindent \textbf{Keywords.} Near-vector spaces, Nearrings, Nearfields, Division rings; Span, $F$-subspaces, Linear Algebra. 

\noindent \textbf{2020 Math. Subject Class.} 15A03; 16D90
\end{abstract}
\newcommand{\sophie}[1]{{\color{red}\sf $\clubsuit\clubsuit\clubsuit$ Sophie: [#1]}}
\newcommand{\dani}[1]{{\color{yellow}\sf $ \spadesuit \spadesuit \spadesuit$ Dani: [#1]}}

\begin{acknowledgements}

We extend our gratitude to Prof.~Janelidze for his invaluable insights and discussions throughout the writing process. Prof.~Janelidze also serves as the Ph.D. supervisor for the second author, enriching the collaborative experience. Our appreciation also goes to Dr.~Boonzaaier for his meticulous review of our paper and valuable feedback. We acknowledge the contributions of the anonymous referee whose constructive comments have significantly improved our work. Additionally, we are indebted to Prof.~Wadsworth for his insightful comments, which have greatly contributed to the quality of this paper.

\end{acknowledgements}
\tableofcontents
\section*{Introduction}\label{d1}
Dr.~André introduced the notion of near-vector spaces (see \cite{Andre}). Prof.~Howell and her students significantly contributed to the study of near-vector spaces (see, for instance, \cite{Howell} and \cite{Howell2}).

The world around us is clearly nonlinear. However, when we step outside the realm of linear algebra, things can become very complex. As we will explore throughout this paper, the world of near-vector spaces allows us to work with nonlinear problems and yet grants access to most of the tools linear algebra offers.

The goal of this paper is to develop a basic theory of near-linear algebra inspired by classical linear algebra. Here, we establish some fundamental results. Given a scalar group $F$ (see Definition \ref{scalar group}), we define $F$-subspaces of $F$-spaces simply by requiring closure under scalar multiplication and addition (see Definition \ref{$F$-subspace}). We then prove that an $F$-subspace of a near-vector space is itself a near-vector space (the first author had already proven this result over division rings, and the proof here directly generalizes this result; meanwhile, it was generalized by different means in \cite{HR22}). Another significant result is that any quotient of a near-vector space by an $F$-subspace is also a near-vector space. This result enables us to establish the First Isomorphism Theorem for near-vector spaces. We explore the notion of linear independence outside the quasi-kernel; this notion allows us to define near-vector spaces in terms of the existence of a scalar basis, resulting in an intriguing new notion of basis.

In the first section of the paper, we introduce the preliminary material. We propose a new way to define near-vector spaces by introducing the notion of a scalar group and a scalar group action. This alternative perspective on near-vector spaces will make it easier for us to manipulate them. We also review a few essential elementary properties that will be of significance throughout the paper.

Section 2 focuses on the span, basis, and linear independence of near-vector spaces. We develop fundamental tools to study near-vector spaces, identifying differences between classical linear algebra and near-linear algebra by exploring the properties of span and linear independence. We regard elements in the quasi-kernel as scalar elements, shedding more light on the importance of the definition of the quasi-kernel in near-linear algebra theory (see Definition \ref{quasikernel}). We provide different characterizations of linear independence outside the quasi-kernel (see Lemma \ref{QL}). We introduce two notions of a basis: an $F$-basis and a scalar $F$-basis (see Definition \ref{basis}). These bases encapsulate some of the fundamental distinctions between classical linear algebra and near-linear algebra.

In section 3, we present the key technical results of the paper (see Theorem \ref{minimal} and Corollary \ref{spanlemma}). The first result explicitly describes the span of any vector as a direct sum of elements from the quasi-kernel. The formula involves the notion of the dimension of an element. The proof provides a constructive way to obtain such an element as a direct sum of quasi-kernel elements. An application of this result leads to proving that we can construct a scalar $F$-basis (see Lemma \ref{generatingset}). An important observation is that, unlike classical linear algebra, not all $F$-bases have the same cardinality. Particularly, we prove the Replacement Theorem for scalar $F$-bases (see Lemma \ref{replacement}). Another fundamental result of this section asserts that a near-vector space over a scalar group $F$ is simply a pair $(V, \mu)$ where $V$ is an abelian group and $\mu$ is a scalar group action admitting a scalar $F$-basis (see Theorem \ref{scalarbasis}). The second key technical result is trivial in classical linear algebra but nontrivial in near-linear algebra; it compares the spans of non-zero elements $v$ and $w$, where $w \in \operatorname{Span}(v)$.

In section 4, the work done in the previous section leads to four results that hold true in classical linear algebra and remain valid for near-vector spaces. The first result is that any $F$-subspace is a near-vector space (see Theorem \ref{nvs}). The second result is that any quotient by an $F$-subspace is also a near-vector space (see Theorem \ref{quotientnvs}). From this, we deduce the one-to-one correspondence between the $F$-subspace and the kernel of a linear map of near-vector spaces (see Corollary \ref{$F$-subspacekernel}). Finally, we establish the First Isomorphism Theorem for near-vector spaces (see Theorem \ref{fit}).

Section 5 discusses how any near-vector space can be viewed as a module over a scalar group algebra. The notion of span for near-vector spaces coincides with the span concept with respect to this module. This description offers more insight into near-linear algebra. We derive a geometric interpretation of the quasi-kernel at the section's conclusion.

\section{Preliminary material}
Prof.~Janelidze conceptualized the initial version of the following definition. This definition will aid in the manipulation of near-vector spaces more effectively. 
\begin{definition}
\label{scalar group}
A {\bf scalar group} $F$ is a tuple $F=(F,\cdot,1,0,-1)$ where $(F,\cdot,1)$ is a monoid, $0, -1\in F$, 
$0\cdot \alpha=0=\alpha\cdot 0$ for all $\alpha\in F$, $\{ \pm 1\}$ is the solution set of the equation $x^2=1$, and $(F \backslash \{ 0\},\cdot,1)$ is a group. For all $\alpha \in F$, we denote $-\alpha$ as the element $(-1)\cdot \alpha$.
\end{definition}
\begin{remark}
A monoid formed by adding a zero element to a group is often referred to as a "group with zero". It's worth noting that Definition \ref{scalar group} essentially defines a group with zero in which the equation $x^2=1$ possesses precisely two distinct solutions.
\end{remark}
We can define the concept of the action of a scalar group as follows.

\begin{definition}  
\label{actiondef}
Let $V=(V,+)$ be an abelian group, $(F, \cdot , 1, 0, -1)$ be a scalar group, and $\mu : F \times V \rightarrow V $ be a map that assigns $(\alpha , v)$ to $\alpha \cdot v$.

\begin{enumerate}
\item We denote the map $\mu$ as an {\bf action of $F$ on $V$}. In this paper, when we refer to an action as $\mu$, we will use $\alpha \cdot v$ to denote the image of an element $(\alpha , v)$ in $F \times V$ through $\mu$.
\item We define an action $\mu$ as a {\bf left semigroup action} if, for all $\alpha , \beta \in F$, and $v\in V$, it holds that $\alpha \cdot (\beta \cdot v) = (\alpha \cdot \beta) \cdot v$ and $1 \cdot v = v$.
\item We define a semigroup action $\mu$ as {\bf free} if the action is free, meaning that for any $\alpha, \beta \in F$, and $v \in V$, if $\alpha \cdot v = \beta \cdot v$, then it follows that $v=0 \ \textrm{or} \ \alpha = \beta $.
 \item We define an action $\mu$ as {\bf compatible with the $\mathbb{Z}$-structure of $V$} if the following conditions are met:
 \begin{itemize} 
 \item $\mu$ {\bf acts by endomorphisms}, meaning that for all $\alpha \in F$, $v, w \in V$, the equation $\alpha \cdot (v + w) = \alpha \cdot v + \alpha \cdot w$ holds, 
\item {\bf $-1$ acts as $-id$}, which implies that for all $v \in V$, $-1 \cdot v = -v$, 
\item {\bf $0$ acts trivially}, meaning that for all $v \in V$, $0 \cdot v = 0$.
\end{itemize}
\item We define an action $\mu$ as a {\bf left scalar group action} if it is a left semigroup action compatible with the $\mathbb{Z}$-structure of $V$.
\end{enumerate} 
\end{definition}

\begin{definition}
\label{$F$-subspace}
Let $F$ be a scalar group. An {\bf $F$-space} is a pair $(V, \mu)$ where $V$ is an abelian additive group and $\mu: F \times V \rightarrow V$ is a left scalar group action.
When there is no confusion, we will simply denote $(V, \mu)$ as $V$ and $\mu(\alpha, v)$ as $\alpha \cdot v$, for all $\alpha \in F$ and $v \in V$. We say that $W$ is an {\bf $F$-subspace} of $V$ if $W$ is a nonempty subset of $V$ that is closed under addition and scalar multiplication.
\end{definition}
Next, we define the notion of the quasi-kernel of an $F$-space.
\begin{definition}
\label{quasikernel}
Let $V$ be an $F$-space. We define the {\bf quasi-kernel} of $V$ to be
$$Q(V) = \{v \in V \mid \forall_{\alpha, \beta \in F} \exists_{\gamma \in F} [\alpha \cdot v + \beta \cdot v = \gamma \cdot v]\}.$$
\end{definition}

We can also define a semigroup action on the right. In this paper, we will restrict ourselves to left near-vector spaces. All of our results will also apply to right near-vector spaces. The definition below is equivalent to \cite[Definition 4.1]{Andre} of a near-vector space using the terminologies introduced in Definition \ref{actiondef}.
\begin{definition}
\label{NVS2}
A {\bf left near-vector space} over a scalar group $F$ is an $F$-space $(V, \mu)$ such that the left scalar group action $\mu$ is free and $Q(V)$ generates $V$ seen as an additive group.
Any trivial abelian group has a near-vector space structure through the trivial action. We refer to such a space as a {\bf trivial near-vector space over $F$}. We denote a trivial near-vector space as $\{0\}$.
\end{definition}

In the following, a left near-vector space is simply referred to as a near-vector space and $F$ denotes a scalar group and $V$ denotes a near-vector space over the scalar group $F$ unless stated differently.
\begin{remark}
\label{QK}
\begin{enumerate}
\item The freeness of the action $\mu$ is usually referred to as the fixed point-free property in near-vector space theory literature.
\item The fixed point-free property can be translated in terms of the stabilizer as follows. For all $v \in V$, we have that the set of elements of $F$ that stabilize $v$ is described as follows:
\begin{equation*}
    \operatorname{Stab}_F(v) = \begin{cases} F & \text{if } v = 0; \\
    \{1\} & \text{if } v \neq 0.
    \end{cases}
\end{equation*}
\item $Q(V)$ is stable by scalar multiplication (see \cite[Lemma 2.2]{Andre}).
\end{enumerate}
\end{remark}

We provide illustrative examples of near-vector spaces that deviate from traditional vector spaces, highlighting their unique characteristics.

\begin{exmps}
\label{nearfield}
We provide three examples of near-vector spaces. The first example involves a near-vector space over a scalar group that constitutes the underlying multiplicative group of a field. The second example pertains to a near-vector space over a scalar group that is the underlying multiplicative group of a near-field. We denote $[a]_n$ the equivalence class of the integer $a$ modulo $n$ or simply $[a]$ when $n$ is clear from context.

\begin{enumerate}
\item Consider the scalar group $(\mathbb{R}, \cdot)$, where $1$ serves as the identity element, $-1$ serves as the additive inverse, $0$ serves as the zero element, and multiplication is the usual multiplication in $\mathbb{R}$. This scalar group is the underlying multiplicative group of the field $(\mathbb{R}, +, \cdot)$. We can define an action by endomorphism of $\mathbb{R}$ on $\mathbb{R}^3$  as follows:
 $$\alpha \smallstar (x,y,z) = (\alpha x, \alpha y, \alpha^{3} z)$$ for $\alpha \in \mathbb{R}$ and $(x,y,z) \in \mathbb{R}^{3}$. $(\mathbb{R}^3,\smallstar)$ becomes a near-vector space with these operations. Notably,
 \begin{equation*}
     Q(\mathbb{R}^3,\smallstar) = \left\{\alpha(1,1,0)\mid \alpha \in \mathbb{R} \right\} \cup \left\{\alpha(0,0,1)\mid \alpha \in \mathbb{R} \right\},
\end{equation*}
thus making the quasi-kernel of $(\mathbb{R}^3,\smallstar)$ the union of two $\mathbb{R}$-subspaces: $\{\alpha (1,1,0) \mid \alpha \in \mathbb{R}\}$ and $\{\alpha (0,1,1) \mid \alpha \in \mathbb{R}\}$.
\item Consider the scalar group $ (\mathbb{Z}_{5}, \cdot)$, where $[1]$ serves as the identity element, $[4]$ serves as the additive inverse, $[0]$ serves as the zero element, and multiplication is the usual multiplication in $\mathbb{Z}_{5}$. This scalar group is the underlying multiplicative group of the field $(\mathbb{Z}_{5}, +, \cdot)$. 
We define an action by endomorphism of $\mathbb{Z}_{5}$ on $\mathbb{Z}_{5}^{2}$ as follows:
\begin{equation*}
    \alpha \star(x,y) = (\alpha x, -\alpha y)
\end{equation*}
for $\alpha \in \mathbb{Z}_{5}$ and $(x,y) \in \mathbb{Z}_{5}^{2}$.
$(\mathbb{Z}_{5}^{2},\star)$ becomes a near-vector space with these operations. Notably,
\begin{equation*}
    Q(\mathbb{Z}_{5}^2, \star) = \{\alpha ([1],[0]) \mid \alpha \in \mathbb{Z}_5\}\cup \{\alpha ([0],[1])\mid \alpha \in \mathbb{Z}_5\},
\end{equation*}
thus making the quasi-kernel of $(\mathbb{Z}_{5}^2, \star)$ the union of two $\mathbb{Z}_{5}$-subspaces: $\{\alpha ([1],[0]) \mid \alpha \in \mathbb{Z}_5\}$ and $\{\alpha ([0],[1])\mid \alpha \in \mathbb{Z}_5\}$. 
However, $(\mathbb{Z}_{5}^2, \star)$ is not a vector space, as evidenced by the fact that $([1],[2])([1],[1]) = ([3],[2]) \neq ([3],3) = [1]([1],[1])+[2]
([1],[1])$.

\item Consider the finite field $(GF(3^{2}), +, \cdot)$ (see also \cite{pilz}). We have
\begin{equation*}
    GF(3^{2}) := \{[0],[1],[2], \gamma, [1]+ \gamma, [2] + \gamma, [2]\gamma, [1]+[2]\gamma, [2] + [2]\gamma \},
\end{equation*}
where $\gamma$ is a root of $x^{2}+[1] \in \mathbb{Z}_{3}[x]$. The operations on $GF(3^{2})$ are defined as follows:
\begin{equation*}
\label{addition}
   ([a]+[b]\gamma)+([c]+[d]\gamma) = ([a]+[c])+ ([b]+[d])\gamma,
\end{equation*}
for all $[a],[b],[c],[d] \in \mathbb{Z}_{3}$, and the usual multiplication is depicted in Table~\ref{TableA}.

{\tiny
\begin{table}[h]
\centering 
\begin{tabular}{l|cccccccccccccccccccccccccc} 
 $\cdot$ &$[0]$ & $[1]$ &$[2]$& $\gamma$ &$[1]+\gamma$ &$[2]+\gamma$ &$[2]\gamma$ & $[1]+[2]\gamma$ &$[2]+[2]\gamma$ &
\\ \\
\hline \\
 $[0 ]$ & $[0 ]$&$[0]$ & $[0 ]$ &$[ 0]$ & $[0]$ & $[0]$ & $[0]$& $[0]$& $[0]$& \\ \\
 $[1 ]$ &  $[0 ]$ & $[1 ]$&  $[2]$ & $\gamma$ & $[1 ]+ \gamma$ & $[2]+\gamma$ & $[2]\gamma$ & $[1 ]+[2]\gamma$ & $[2]+[2]\gamma$ \\ \\

$[2]$ & $[0 ]$ & $[2]$ & $[1 ]$ & $\gamma$ & $[2]+[2]\gamma$ & $1+[2]\gamma$ & $\gamma$ & $[2]+\gamma$ & $[1 ]+\gamma$ \\ \\

$\gamma$ &$[0 ]$& $\gamma$ & $[2]\gamma$ & $[2]$ & $[2]+\gamma$ & $[2]+[2]\gamma$ & $[1 ]$& $[1 ]+\gamma$& $[1 ]+[2]\gamma$ \\ \\

$[1 ]+\gamma$ &$[0 ]$ & $[1 ]+\gamma$ & $[2]+[2]\gamma$ & $[2]+\gamma$ & $[2]\gamma$ & $[1 ]_3$ & $[1 ]_3+[2]_3\gamma$ & $[2]_3$ & $\gamma$ \\ \\

$[2]+\gamma$ &$[0 ]$ & $[2]+\gamma$ & $[1 ]+[2]\gamma$ & $[2]+[2]\gamma$ & $[1 ]$ & $\gamma$ & $[2]+\gamma$ & $[2]\gamma$ & $[2]$ \\ 
\\

$[2]\gamma$ &$[0 ]$ & $[2]\gamma$ & $\gamma$ & $[1 ]$& $[1 ]+[2]\gamma$& $[1 ]+\gamma$& $[2]$ & $[2]+[2]\gamma$& $[2]+\gamma$ \\ \\

$[1 ]+[2]\gamma$ &$[0 ]$ & $[1 ]+[2]\gamma$& $[2]+\gamma$& $[1 ]+\gamma$& $[2]$ & $[2]\gamma$& $[2]+[2]\gamma$& $\gamma$& $[1 ]$ \\ \\
$[2]+[2]\gamma$ &$[0 ]$& $[2]+[2]\gamma$& $[1 ]+\gamma$& $[2]+\gamma$& $[1 ]$&$[2]$ &$[1 ]+[2]\gamma$&$[2]\gamma$&$\gamma$ \\ \\ \\
\end{tabular}
\caption{Multiplication table for the field $GF(3^2)$}\label{TableA}
\end{table}}

We introduce a new operation $\circ$ on $GF(3^{2})^2$ as follows:
\begin{equation*}
    x \circ y := \left\{ \begin{array}{ll}
     x \cdot y & \text{if} \ x \ \text{is a square in } (GF(3^{2}), +, \cdot) \\
     x\cdot y^{3} &  \text{otherwise}. 
\end{array}
\right.
\end{equation*}
This operation is also presented in Table~\ref{TableB}. We have that $(GF(3^{2}), +, \circ)$ is a near-field and  $(GF(3^2),\circ)$ a scalar group.
{\tiny
\begin{table}[h]
\centering 
\begin{tabular}{l|cccccccccccccccccccccccccc} 

 $\circ$ & $[0]$ & [1] &[2]& $\gamma$ &$[1]+\gamma$ &$[2]+\gamma$ &$[2]\gamma$ & $[1]+[2]\gamma$ &$[2]+[2]\gamma$ &
\\  \\
\hline  \\
 $[0]$ &    $[0]$ &  $[0]$&   $[0]$ &  $[0]$&   $[0]$&   $[0]$ &   $[0]$&  $[0]$&   $[0]$& \\ \\
$[1]$ &   $[0]$ & $[1]$ & $[2]$ & $\gamma$ & $ [1]+ \gamma$ & $[2]+\gamma$ & $[2]\gamma$ & $ [1]+[2]\gamma$ & $[2]+[2]\gamma$ \\ \\

$[2]$ &  $[0]$ & $[2]$ & $[1]$  & $[2]\gamma$ & $[2]+[2]\gamma$ & $ [1]+[2]\gamma$ & $\gamma$ & $[2]+\gamma$ & $ [1]+\gamma$ \\ \\

$\gamma$ &  $[0]$ & $\gamma$ & $[2]\gamma$ & $[2]$ & $[2]+\gamma$ & $[2]+[2]\gamma$ &  $[1]$ & $ [1]+\gamma$& $ [1]+[2]\gamma$ \\ \\

$[1]+\gamma$ & $[0]$ & $ [1]+\gamma$ & $[2]+[2]\gamma$ & $ [1]+[2]\gamma$ & $[2]$ & $\gamma$ & $[2]+\gamma$ & $[2]\gamma$ &  [1] \\ \\

$[2]+\gamma$ & $[0]$ & $[2]+\gamma$ & $ [1]+[2]\gamma$ & $ [1]+\gamma$ & $[2]\gamma$ & $[2]$ & $[2]+[2]\gamma$&  $[1]$  & $\gamma$ \\ \\

$[2]\gamma$ & $[0]$ & $[2]\gamma$ & $\gamma$ &  $[1]$ & $ [1]+[2]\gamma$& $ [1]+\gamma$& $[2]$& $[2]+[2]\gamma$& $[2]+\gamma$ \\ \\

$ [1]+[2]\gamma$ & $[0]$& $ [1]+[2]\gamma$& $[2]+\gamma$& $[2]+[2]\gamma$& $\gamma$&  $[1]$ & $ [1]+\gamma$& $[2]$& $[2]\gamma$ \\ \\
$[2]+[2]\gamma$& $[0]$& $[2]+[2]\gamma$& $ [1]+\gamma$& $[2]+\gamma$&  $[1]$ &$[2]\gamma$ &$ [1]+[2]\gamma$& $\gamma$& $[2]$ \\ \\
\end{tabular}
\caption{Multiplication table for the near-field $GF(3^2)$}\label{TableB}
\end{table}}

By defining an action by endomorphism $\diamond$ of $GF(3^2)$ on $(GF(3^2))^2$ as 
$$\alpha \diamond (x_{1},x_{2}) = (\alpha \circ x_{1}, \alpha \circ x_{2}),$$ we create the near-vector space $((GF(3^2))^2,\diamond)$. Additionally,
$$Q((GF(3^2))^2,\diamond) = \{ \lambda (d_{1},d_{2}) \mid \lambda \in GF(3^2) \text{ and } d_{1},d_{2} \in GF(3)\}.$$
For more detailed proof, refer to \cite[Section 2.3]{Moore}. However, this is not a vector space, simply noting that $([1]+\gamma)([1],[2]+\gamma) = ([1]+\gamma, \gamma) \neq ([1]+\gamma,[1]) = [1]([1],[2]+\gamma)+\gamma([1],[2]+\gamma)$.
\end{enumerate}
We will utilize Example \ref{nearfield} (1) to elucidate the properties of near-vector spaces throughout this paper. Analogous observations can be drawn from the remaining examples. The analysis of the other two instances provided in Example \ref{nearfield} is left to the readers.

\end{exmps}

The concept of the quasi-kernel enables the introduction of an abelian group operation on \(F\). This operation, outlined in the ensuing definition, transforms \(F\) into a near-field (see \cite[Lemma 2.4]{Andre}).

\begin{definition}\cite[Section 2]{Andre} \label{qvs} For any \(v \in Q(V)\backslash \{ 0\}\) and \(\alpha , \beta \in F\), we denote \(\alpha +_v \beta\) as the unique \(\gamma \in F\) such that \(\alpha \cdot v + \beta \cdot v = \gamma \cdot v\). Given a family \((\alpha_i)_{i \in \{1, \cdots , n\}}\) of elements in \(F\), we signify by \({}^v \sum_{i=1}^n\alpha_i\) the sum \(\alpha_1 +_v \alpha_2 +_v \cdots +_v \alpha_n\). 
\end{definition}

\begin{remark}
\label{alpha+vbeta=0} 
For \(\alpha, \beta \in F\) and \(v \in Q(V) \backslash \{0\}\), it holds that \(\alpha +_{v} \beta = 0\) if and only if \(\alpha = -\beta\). This equivalence stems from the following reasoning: Let \(\alpha, \beta \in F\) and \(v \in Q(V) \backslash \{0\}\). Assume \(\alpha+_{v} \beta = 0\). Then \((\alpha +_{v} \beta)\cdot v = \alpha \cdot  v + \beta \cdot  v = 0\), implying that \(\alpha \cdot v = -\beta \cdot v\). Due to the fixed point free property of \(V\), it follows that \(\alpha = -\beta\). Conversely, let \(\alpha = - \beta\). This leads to \(\alpha \cdot v + \beta \cdot v = \alpha \cdot v - \alpha \cdot  v = 0\). Since \(v \in Q(V)\), we have \(\alpha \cdot v + \beta \cdot v = (\alpha +_{v} \beta) \cdot v\). Consequently, we have \((\alpha +_{v} \beta) \cdot v = 0 = 0 \cdot v\), and \(\alpha +_{v} \beta = 0\) by the fixed point free property of \(V\).
\end{remark}

In the realm of near-vector space theory, a new concept for the dimension of an element emerges.

\begin{definition} \cite[Definition 3.5]{Howellspanning}
\label{summ}
 For \(v \in V \backslash \{ 0\} \), the \textbf{dimension of an element \(v\)}, denoted by \(\operatorname{dim}(v)\), is the minimum number of distinct elements in \(Q(V)\) such that \(v\) can be expressed as a sum of those elements. For \(v=0\), we set \(\operatorname{dim}(v)=0\).
\end{definition}

Although elementary once stated, the following lemma is pivotal in proving the main results.

\begin{lemma}
\label{pluses1}
If \(A\) is a non-empty finite subset of \(Q(V)\), and \(+_{a} = +_{b}\) for all \(a, b \in A\), then \(\sum_{a \in A} a \in Q(V)\).
\end{lemma}

\begin{proof}
Let \(\alpha, \beta \in F\). Assume \(+_{a} = +_{b}\) for all \(a, b \in A\). Let \(+ := +_a\) for all \(a \in A\). Then, 
\begin{align*}
    \alpha \cdot \sum_{a \in A} a + \beta \cdot \sum_{a \in A}a & = \sum_{a \in A} \alpha \cdot a + \sum_{a \in A}\beta \cdot  a = \sum_{a \in A}(\alpha +_{a} \beta) \cdot a \\
    &= \sum_{a \in A}(\alpha + \beta)\cdot a  = (\alpha + \beta)\cdot \sum_{a \in A} a 
\end{align*}
and thus, \(\sum_{a \in A} a \in Q(V)\).
\end{proof}

\begin{definition}
We write \(A \finsub B\) to indicate that \(A\) is a finite subset of \(B\). 
\end{definition}
The subsequent lemma, while relatively straightforward to prove once introduced, plays a fundamental role in the underlying concepts of the main results.

\begin{lemma}
\label{pluses}
Let $v \in V$. There is $\Theta \finsub Q(V)$ with $|\Theta|=dim(v)$ such that $v = \sum_{q \in \Theta } q$ (see Definition \ref{summ}). Then, for all $q, q' \in \Theta$ such that $q \neq q'$, we have $+_{q} \neq +_{q'}$. 
\end{lemma}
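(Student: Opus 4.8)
The plan is to argue by contradiction, exploiting the minimality built into the definition of $\operatorname{dim}(v)$ (Definition \ref{summ}). Suppose there were distinct $q,q'\in\Theta_v$ with $+_q=+_{q'}$. I would then partition $\Theta_v$ according to the associated $+$-operation and focus on the block $C:=\{\,\tilde q\in\Theta_v\mid +_{\tilde q}=+_q\,\}$, which contains both $q$ and $q'$, so $|C|\geq 2$. Since all elements of $C$ share the same operation, Lemma \ref{pluses1} applies to $C$ and gives $\sigma:=\sum_{\tilde q\in C}\tilde q\in Q(V)$; moreover the displayed computation in the proof of Lemma \ref{pluses1} shows, via the uniqueness clause in Definition \ref{qvs}, that $+_\sigma=+_q$ whenever $\sigma\neq 0$.

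Next I would split into two cases. If $\sigma=0$, then $v=\sum_{\tilde q\in\Theta_v}\tilde q=\sum_{\tilde q\in\Theta_v\setminus C}\tilde q$ writes $v$ as a sum of the $|\Theta_v|-|C|<\operatorname{dim}(v)$ distinct elements of $Q(V)$ lying in $\Theta_v\setminus C$, contradicting the minimality of $\operatorname{dim}(v)$. If $\sigma\neq 0$, set $\Theta':=(\Theta_v\setminus C)\cup\{\sigma\}$. Every element of $\Theta_v\setminus C$ has $+$-operation different from $+_q$, while $+_\sigma=+_q$; hence $\sigma\notin\Theta_v\setminus C$, so $\Theta'$ is a set of $(\operatorname{dim}(v)-|C|)+1\leq\operatorname{dim}(v)-1$ distinct elements of $Q(V)$ with $\sum_{\tilde q\in\Theta'}\tilde q=\sum_{\tilde q\in\Theta_v\setminus C}\tilde q+\sigma=v$, again contradicting minimality. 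In both cases we reach a contradiction, so no two distinct elements of $\Theta_v$ can carry the same $+$-operation.

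The one point that needs care — and the reason I would collapse the whole block $C$ at once rather than just the offending pair $\{q,q'\}$ — is the distinctness bookkeeping: naively replacing $q$ and $q'$ by $q+q'$ might reproduce an element already present in $\Theta_v$ and so fail to lower the cardinality. Collapsing all of $C$ simultaneously removes this danger, since the new element $\sigma$ inherits the $+$-operation $+_q$ and therefore cannot clash with any of the differently-typed elements of $\Theta_v\setminus C$. Everything else is immediate from Lemma \ref{pluses1} and the definition of $\operatorname{dim}(v)$.
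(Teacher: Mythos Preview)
Your argument is correct and follows the same contradiction strategy as the paper: assume two distinct elements of $\Theta_v$ share the same $+$-operation, invoke Lemma~\ref{pluses1} to produce a single quasi-kernel element from them, and contradict the minimality of $\operatorname{dim}(v)$.

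The paper, however, is terser: it simply observes $q+q'\in Q(V)$ and declares this a contradiction of minimality, without addressing the distinctness issue you flag. Your block-collapsing refinement---replacing all of $C$ by $\sigma$ and using $+_\sigma=+_q$ to ensure $\sigma\notin\Theta_v\setminus C$---is a genuine improvement in rigor over the paper's argument, which tacitly assumes $q+q'$ is new (and nonzero). So your route is essentially the paper's, but with a gap patched; the extra care costs little and buys a fully justified contradiction.
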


\begin{proof}
Let \(v \in V\). Let \(\Theta \finsub Q(V)\) with \(|\Theta|=\operatorname{dim} (v)\) such that \(v = \sum_{q \in \Theta } q\). We proceed by contradiction. Assume there exist \(q, q' \in \Theta\) with \(q \neq q'\) and \(+_{q} = +_{q'}\). By Lemma \ref{pluses1}, \(q+q' \in Q(V)\), contradicting the minimality of \(\operatorname{dim}(v)\).
\end{proof}

\begin{remark}
\label{Q(W)subsetQ(W')}
\begin{enumerate} 
\item If \(W\) and \(W'\) are \(F\)-subspaces of \(V\) with \(W \subseteq W'\), then \(Q(W) = W\cap Q(W')\). Particularly, \(Q(W) = W\cap Q(V)\) and \(Q(W) \subseteq Q(W')\).
\item When \(W\) is an \(F\)-subspace of \(V\), the action \(\mu\) that imparts \(V\) with its near-vector space structure over the scalar group \(F\) also induces a left scalar group free action \(\mu\vert_{W}: F \times W \rightarrow W\) by restricting the map \(\mu\) to \(F \times W\). At this stage, it may not be evident that the quasi-kernel of \(W\) generates \(W\) as an abelian group. We will prove this in Theorem \ref{nvs}. 
\end{enumerate} 
\end{remark}

To describe spans of near-vector spaces, the subsequent terminology is required.

\begin{definition}
\label{sum$F$-subspace} 
For an \(F\)-space \(V\), let \(\{ S_i\}_{i\in I}\) be a family of subsets of \(V\). We define the \textbf{sum} of the family \(\{ S_i\}_{i\in I}\) as follows:
\[
\sum_{i \in I} S_i = \left\{ \sum_{j \in J} s_j \mid J \finsub I, \ s_j \in S_j , \ \text{for all } j \in J \right\}.
\]
The empty sum is denoted as \(\sum_{v \in \emptyset}v = 0\). If for every \(J \finsub I\), \(s_j,s_{j}' \in S_j\) for all \(j\in J\) such that \(\sum_{j\in J} s_j= \sum_{i\in J} s_j'\), then \(\sum_{i \in I} S_i\) is termed a \textbf{direct sum}, denoted as \(\oplus_{i \in I} S_i\). The family \(\{ S_i\}_{i\in I}\) is referred to as \textbf{additively independent} when the sum \(\sum_{i \in I} S_i\) is a direct sum.  
\end{definition}

Demonstrating that the sum of subspaces remains a subspace, as stated by the following lemma, is a straightforward task.

\begin{lemma}
\label{sumissubspace}
Consider a family \(\{ W_i\}_{i\in I}\) of \(F\)-subspaces of \(V\). The set \(\sum_{i \in I} W_i\) is likewise an \(F\)-subspace of \(V\).
\end{lemma}

The First Isomorphism Theorem for near-vector spaces will be established. The ensuing definitions introduce the concepts of homomorphism, kernel, and image in this context. It's important to note that these definitions mirror those used in classical linear algebra.

\begin{definition} \label{ker} 
Given near-vector spaces \(V\) and \(V'\) over the same scalar group \(F\), a morphism of near-vector spaces \(f: V \rightarrow V'\) is referred to as a \textbf{linear map}. In other words, for all \(x,y \in V\) and \(\alpha \in F\):
\begin{enumerate}
    \item \(f(x + y) = f(x)+f(y)\);
    \item \(f(\alpha \cdot x) = \alpha \cdot f(x)\).
\end{enumerate}
\end{definition}

Throughout this paper, we assume that \(f: V \rightarrow V'\) is a linear map, with \(V\) and \(V'\) consistently denoting near-vector spaces over the same scalar group \(F\).

\begin{definition} \label{ker} 
Given a linear map \(f: V \rightarrow V'\), we define the \textbf{kernel of \(f\)} as the set 
\[
\operatorname{Ker}(f)= \{ v \in V \mid f(v) =0\},
\]
and the \textbf{image of \(f\)} as the set 
\[
\operatorname{Im}(f) = \{ f(v)\mid v \in V\}.
\]
\end{definition}

Similar to the principles in classical linear algebra, we can establish that the kernel and image of a linear map also conform to the structure of an \(F\)-space. 

\begin{lemma}
For a linear map \(f:V \rightarrow V'\), the set \(\operatorname{Ker}(f)\) is an \(F\)-subspace of \(V\), and \(\operatorname{Im}(f)\) is an \(F\)-subspace of \(V'\).
\end{lemma}

\section{Linear independence, span and basis} 

The concept of the span for \(F\)-spaces aligns closely with the definition of span for vector spaces. Here, we introduce the notion of span for any \(F\)-space. It's important to note that the fixed point free property and the quasi-kernel have no impact on the following definitions.

\begin{definition}\cite[Definition 3.2]{Howellspanning}
Consider an \(F\)-space \(V\). We define the {\bf span of a set \(S\)} as the intersection \(W\) of all non-empty subsets of \(V\) that are closed under addition and scalar multiplication and contain the set \(S\). This span is denoted as \(\operatorname{Span}(S)\).
\end{definition}

\begin{remark}
\label{smallest}
For an \(F\)-space \(V\) and a subset \(S \subseteq V\), the following observations hold:
\begin{enumerate} 
\item If \(A \subseteq V\) and \(A \subseteq \operatorname{Span}(S)\), then \(\operatorname{Span}(A) \subseteq \operatorname{Span}(S)\), given that \(\operatorname{Span}(A)\) is the smallest \(F\)-subspace containing \(A\) and \(\operatorname{Span}(S)\) is a non-empty subset of \(V\) closed under addition and scalar multiplication that contains \(S\).
\item When \(S = \emptyset\), it follows that \(\operatorname{Span}(S) = \{0\}\).
\item For all \(\alpha \in F \backslash \{ 0 \}\) and \(v \in V\), \(\operatorname{Span} (\alpha \cdot v) = \operatorname{Span} (v)\).
\end{enumerate}
\end{remark}

Just as in classical linear algebra, the span of a set in \(F\)-spaces corresponds precisely to the set of linear combinations of its elements. The only distinction lies in the complexity of the linear combinations.

\begin{lemma}
\label{spanlin}
Let \(V\) be an \(F\)-space, and \(S\subseteq V\). Then, the set \(\operatorname{Span} (S)\) can be expressed as follows:
\begin{align*}
     \operatorname{Span} (S)= & \left\{\sum_{a \in A} \left(\sum_{i=1}^{n_a} \alpha_{a,i} \cdot a \right) \middle| A \finsub S, n_a \in \mathbb{N}, \alpha_{a,i} \in F, \forall a\in A, \forall i \in \{ 1, \cdots ,n_a\}  \right\}\\
= & \sum_{s\in S}   \operatorname{Span} (s).
\end{align*}
\end{lemma}
\begin{proof}
Let \(S \subseteq V\) and define
\begin{equation*}
     L (S):= \left\{\sum_{a \in A} \left(\sum_{i=1}^{n_a} \alpha_{a,i} \cdot a\right) \middle| A \finsub S, n_a \in \mathbb{N}, \alpha_{a,i} \in F, \forall a \in A, \ \forall i \in \{ 1, \cdots ,n_a\} \right\}.
\end{equation*}
We aim to show that \(L(S) \subseteq \operatorname{Span}(S)\). By the definition of \(\operatorname{Span}(S)\), it's given that \(S \subseteq \operatorname{Span}(S)\). Furthermore, since \(\operatorname{Span}(S)\) represents the intersection of all non-empty subsets of \(V\) that are closed under addition and scalar multiplication and contain \(S\), it also satisfies these properties, thus making it closed under linear combinations.
Thus, \(L(S) \subseteq \operatorname{Span}(S)\), as desired. For the reverse inclusion, we observe that \(L(S)\) contains \(S\) and is closed under addition and scalar multiplication. This yields \(\operatorname{Span}(S) \subseteq L(S)\), considering that \(\operatorname{Span}(S)\) is the smallest non-empty subset of \(V\) closed under addition and scalar multiplication and containing \(S\). As \(\operatorname{Span}(S)\) is a non-empty subset of \(V\) closed under addition and scalar multiplication and containing \(S\), we deduce that \(\sum_{s \in S}\operatorname{Span}(s) \subseteq \operatorname{Span}(S)\). Since \(S \subseteq \sum_{s \in S}\operatorname{Span}(s)\), we have, by applying Remark \ref{smallest}, \(\operatorname{Span}(S) \subseteq \sum_{s \in S}\operatorname{Span}(s)\). As a result, we obtain \(\operatorname{Span}(S) = \sum_{s \in S}\operatorname{Span}(s)\).
\end{proof} 

\begin{definition}
Consider an \(F\)-space \(V\) and \(S\subseteq V\). An element of the form \(\sum_{a \in A} \left(\sum_{i=1}^{n_a} \alpha_{a,i} \cdot a \right)\) according to the notations in Lemma \ref{spanlin} is referred to as a {\bf linear combination of elements of \(S\)}.
\end{definition}

\begin{remark}
For an \(F\)-space \(V\) and \(s\in V\), the set \(\operatorname{Span} (s)\) can be described as:
\[
\operatorname{Span} (s)= \left\{ \sum_{i=1}^{n} \alpha_{s,i} \cdot s \middle|  n \in \mathbb{N}, \alpha_{s,i} \in F \text{ and } i \in \{ 1, \cdots ,n\} \right\}.
\]
\end{remark}

Next, we delve into the definition of linear independence beyond the context of the quasi-kernel.

\begin{definition}
\label{linindep} 
In an \(F\)-space \(V\), let \(S \subseteq V\). \(S\) is considered {\bf linearly independent} if \(0 \notin S\) and for any non-empty subset \(A \finsub S\), along with \(n_a\in \mathbb{N}\) and \(\alpha_{a,i} \in F\) where \(a \in A\) and \(i \in \{ 1, \cdots , n_a\}\), if \(\sum_{a \in A} \left( \sum_{i=1}^{n_a} \alpha_{a,i} \cdot a \right) = 0\), then \(\sum_{i=1}^{n_a} \alpha_{a,i}\cdot a = 0\) for all \(a \in A\). An element \(v\) is {\bf scalar} if \(\operatorname{Span} (v) = F \cdot v\).  
\end{definition}

\begin{remark}
\label{linindepinQ(V)}
\begin{enumerate}
    \item If \(S \subseteq Q(V)\), then we show in Lemma \ref{L=sum} that the definition of linear independence in \ref{linindep} is equivalent to the standard definition of linear independence in near-vector space theory (see \cite[Theorem 3.1]{Andre}). 
    \item Clearly, \(v\) is scalar if and only if for all \(w \in V\) such that \(F \cdot w \subseteq \operatorname{Span} (v)\), we have \(w \in F \cdot v\).
    \item Note that by definition, \(\emptyset\) is a linearly independent set. 
    \end{enumerate}
\end{remark}

We can now establish that the concept of a scalar element is tantamount to asserting that the element belongs to the quasi-kernel, provided \(V\) is a near-vector space.

\begin{lemma}\label{scalar}
The following assertions are equivalent. 
\begin{enumerate} 
\item \(v\) is scalar;
\item \(F\cdot v = F\cdot w \), for all \(w\in \operatorname{Span} (v)\backslash \{ 0 \} \);
\item \(v \in Q(V)\).
\end{enumerate} 
\end{lemma}
\begin{proof} 
\text{(1) \(\Rightarrow\) (2):} Let \(w\in \operatorname{Span} (v)\backslash \{ 0 \}\). Then, by assumption, \(w \in F\cdot v\), implying the existence of \(\alpha \in F \backslash \{0\}\) such that \(w = \alpha \cdot v\). Consequently, \(F\cdot w = F\cdot v\).

\text{(2) \(\Rightarrow\) (3):} Suppose \(\alpha, \beta \in F\). When \(\alpha \cdot v + \beta \cdot v =0\), we have \( \alpha \cdot v + \beta \cdot v  =0 \cdot v \in F \cdot v\).
Alternatively, if \(\alpha \cdot v + \beta \cdot v \in \operatorname{Span}(v)\backslash \{ 0 \} \), we get \(\alpha \cdot v + \beta \cdot v \in F \cdot v\) due to the equality \(F \cdot v = F \cdot (\alpha \cdot v + \beta \cdot v) \). Hence \(v \in Q(V)\).

\text{(3) \(\Rightarrow\) (1):} Let \(x \in \operatorname{Span}(v)\). Then, by Lemma \ref{spanlin}, \(x = \sum_{i=1}^{n} \alpha_{i} \cdot v\) for \(n \in \mathbb{N}\) and \(\alpha_{i} \in F\) where \(i \in \{ 1, \cdots ,n\}\). We proceed with an induction on \(n\) to establish \(\sum_{i=1}^{n} \alpha_{i} \cdot v= ({}^v\sum_{i=1}^{n} \alpha_{i}) \cdot v \). The base case \(n=1\) is straightforward. 
For the inductive step, assume that for some \(n \in \mathbb{N}\), \(\sum_{i=1}^{n} \alpha_{i} \cdot v= ({}^v\sum_{i=1}^{n} \alpha_{i}) \cdot v \) holds for all \(\alpha_i\in F\) where \(i \in \{ 1, \cdots, n \}\). Let \(\alpha_i\in F\) where \(i \in \{ 1, \cdots, n+1 \}\). We prove that \(\sum_{i=1}^{n+1} \alpha_{i} \cdot v = ({}^v\sum_{i=1}^{n+1} \alpha_{i}) \cdot v \). Given that \(v \in Q(V)\), we deduce that
\[
\sum_{i=1}^{n+1} \alpha_{i} \cdot v = \sum_{i=1}^{n} \alpha_{i} \cdot v + \alpha_{n+1} \cdot v = ({}^v\sum_{i=1}^{n} \alpha_{i}) \cdot v + \alpha_{n+1} \cdot v = \left({}^v\sum_{i=1}^{n}\alpha_{i} +_{v} \alpha_{n+1} \right) \cdot v,
\]
by the induction hypothesis. This completes the argument and establishes that \(x \in F \cdot v\), thereby proving that \(v\) is scalar.
\end{proof}

The following lemma provides justification for why the chosen notion of linear independence is appropriate for generalizing the concept outside the quasi-kernel.

\begin{lemma}\label{QL}
Let \(V\) be an \(F\)-space and \(S \subseteq V\). The following statements are equivalent.
\begin{enumerate} 
\item \(S\) is linearly independent; 
\item \(0 \notin S\) and for any non-empty \( A \finsub S\) and \(\alpha_a\in \operatorname{Span} (a)\) where \(a \in A\), if \(\sum_{a\in A} \alpha_a =0\), then \(\alpha_a=0\) for all \(a \in A\);
\item \(0 \notin S\) and \(\operatorname{Span}(S)= \oplus_{s\in S} \operatorname{Span}(s)\);
\item \(0 \notin S\) and for every subset \(T\) of \(S\), \(Span(T) \cap Span(S\backslash T)= \{ 0\}\);
\item \(0 \notin S\) and for every \(s \in S\), \(Span(s) \cap \operatorname{Span}(S\backslash \{s\})= \{ 0\}\);
\item \(0\notin S\) and \(\{ Span (s)\}_{s \in S}\) is additively independent. 
\end{enumerate}
\end{lemma}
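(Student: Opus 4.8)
The plan is to establish all six conditions equivalent through a small web of implications, exploiting that several of them are merely notational repackagings of one another. Concretely I would prove $(1)\Leftrightarrow(2)$ directly from Lemma~\ref{spanlin}, then $(2)\Leftrightarrow(3)\Leftrightarrow(6)$ by unwinding the definitions of direct sum and additive independence in Definition~\ref{sumsubspace}, and finally close the loop with $(3)\Rightarrow(4)\Rightarrow(5)\Rightarrow(2)$. Since every one of the six conditions carries the hypotheses ``$S\neq\emptyset$'' and ``$0\notin S$'' (the case $S=\emptyset$ being an uninteresting degeneracy), I would fix these at the outset and concentrate on the substantive closure conditions.

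For $(1)\Leftrightarrow(2)$: Lemma~\ref{spanlin} specialised to a one-element set gives $\operatorname{Span}(a)=\{\sum_{i=1}^{n}\alpha_i\cdot a \mid n\in\mathbb{N},\ \alpha_i\in F\}$, so letting $\alpha_a$ range over $\operatorname{Span}(a)$ is exactly the same as letting $\sum_{i=1}^{n_a}\alpha_{a,i}\cdot a$ range over all its possible values, and ``$\alpha_a=0$'' is literally ``$\sum_{i=1}^{n_a}\alpha_{a,i}\cdot a=0$''; hence the conditions in $(1)$ and $(2)$ coincide verbatim. For $(2)\Leftrightarrow(3)$: by Lemma~\ref{spanlin} we always have $\operatorname{Span}(S)=\sum_{s\in S}\operatorname{Span}(s)$, so $(3)$ is precisely the assertion that this sum is direct, and $(2)$ is the standard ``only the zero relation'' reformulation of directness. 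Going from $(3)$ to $(2)$ is immediate (a non-trivial relation $\sum_{a\in A}\alpha_a=0$ with the $\alpha_a$ not all zero contradicts uniqueness of the representation of $0$). Going from $(2)$ to $(3)$: an equality $\sum_{i\in J}s_i=\sum_{i\in J'}s_i'$ with $s_i,s_i'$ in the respective $\operatorname{Span}(i)$ rewrites, after extending both index sets to $J\cup J'$ by zero summands, as $\sum_{i\in J\cup J'}(s_i-s_i')=0$ with each difference in $\operatorname{Span}(i)$ --- here one uses that $\operatorname{Span}(i)$ is a subspace and that $-1\in F$ acts as $-\mathrm{id}$, so $\operatorname{Span}(i)$ is closed under differences --- whence $(2)$ forces $s_i=s_i'$ throughout. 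Finally $(3)\Leftrightarrow(6)$ is nothing more than the definition of additive independence: it adds to directness the demand that each $\operatorname{Span}(s)$ be non-trivial, and for $s\in S$ one has $\operatorname{Span}(s)\neq\{0\}$ iff $s\neq 0$, i.e.\ iff $0\notin S$.

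It remains to run $(3)\Rightarrow(4)\Rightarrow(5)\Rightarrow(2)$. Assuming $(3)$, let $T\subseteq S$ and $x\in\operatorname{Span}(T)\cap\operatorname{Span}(S\setminus T)$; by Lemma~\ref{spanlin} write $x$ as a finite sum $\sum_{i\in J}s_i$ with $J\finsub T$, $s_i\in\operatorname{Span}(i)$ nonzero, and also as $\sum_{i\in J'}s_i'$ with $J'\finsub S\setminus T$, $s_i'\in\operatorname{Span}(i)$ nonzero; directness of $\sum_{s\in S}\operatorname{Span}(s)$ forces $J=J'$, but $J\cap J'=\emptyset$, so both are empty and $x=0$, giving $(4)$. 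The implication $(4)\Rightarrow(5)$ is the special case $T=\{s\}$. For $(5)\Rightarrow(2)$, take $\emptyset\neq A\finsub S$ and $\alpha_a\in\operatorname{Span}(a)$ with $\sum_{a\in A}\alpha_a=0$; for each fixed $a_0\in A$ we get $\alpha_{a_0}=-\sum_{a\in A\setminus\{a_0\}}\alpha_a$, which lies in $\operatorname{Span}(a_0)$ and, by monotonicity of $\operatorname{Span}$ (Remark~\ref{smallest}(1)), also in $\operatorname{Span}(A\setminus\{a_0\})\subseteq\operatorname{Span}(S\setminus\{a_0\})$; hence $\alpha_{a_0}\in\operatorname{Span}(a_0)\cap\operatorname{Span}(S\setminus\{a_0\})=\{0\}$ by $(5)$, so $\alpha_{a_0}=0$, and $a_0$ being arbitrary gives $(2)$.

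The argument is wholly formal and no step is deep. The one place demanding care is honouring the paper's definition of direct sum: one must keep track of zero summands versus ``supports'' so that the uniqueness clause ``$\sum_{i\in J}s_i=\sum_{i\in J'}s_i'\Rightarrow J=J',\,s_i=s_i'$'' is invoked only for genuine representations, and one must recall that the span of a set is a subspace closed under negation (since $-1\in F$), so that rearranged sums such as $-\sum_{a}\alpha_a$ remain inside the span they are claimed to lie in. Beyond that, the proof is bookkeeping with finite index subsets plus repeated appeals to Lemma~\ref{spanlin} and Remark~\ref{smallest}.
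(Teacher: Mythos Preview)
Your proof is correct and follows essentially the same route as the paper's: both arguments are routine verifications that unpack Definition~\ref{linindep}, Lemma~\ref{spanlin}, and Definition~\ref{sumsubspace}, and the individual implications you prove match those in the paper almost step for step. The only difference is organizational---the paper runs a single cycle $(1)\Rightarrow(2)\Rightarrow(3)\Rightarrow(4)\Rightarrow(5)\Rightarrow(6)\Rightarrow(1)$, whereas you cluster the definitionally trivial equivalences $(1)\Leftrightarrow(2)$ and $(2)\Leftrightarrow(3)\Leftrightarrow(6)$ first and then close the loop via $(3)\Rightarrow(4)\Rightarrow(5)\Rightarrow(2)$---but the substance of each step is identical, and your care about zero summands in the direct-sum definition is well placed (the paper's own $(2)\Rightarrow(3)$ glosses over the same point).
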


\begin{proof} 
\((1) \Rightarrow (2)\Rightarrow (3) \Rightarrow (4) \Rightarrow (5) :\)
These implications are evident.

\((5) \Rightarrow (6):\)
If \(S= \emptyset\), then the result is clear. When \(S\) is not empty, we argue by contradiction. Suppose that \(\{\operatorname{Span}(s)\}_{s \in S}\) is linearly dependent. Then, there exists \(B \finsub S\) non-empty and \(\alpha_{b} \in \operatorname{Span}(s)\) for all \(b \in B\) such that \(\sum_{b \in B} \alpha_{b} = 0\), and the \(\alpha_{b}'s\) are non-zero. Let's say \(\alpha_{b_{0}} \neq 0\). It's important to note that \(\alpha_{b_{0}} \neq 0\) implies \(B\backslash \{b_{0}\} \) is non-empty. Then, \(\alpha_{b_{0}} = \sum_{x \in B\backslash \{b_{0}\}} (- \alpha_{x})\) with \(\alpha_{x} \in \operatorname{Span}(x)\) for all \(x \in B\backslash \{b_{0}\}\), and so \(\alpha_{b_{0}} \in \operatorname{Span}(S \backslash \{b_{0}\})\). This means \(\alpha_{b_{0}}\in \operatorname{Span}(S \backslash \{b_{0}\}) \cap \operatorname{Span}(b_{0})\), which contradicts our initial assumption. 

\((6) \Rightarrow (1):\)
If \(S= \emptyset\), then the result follows from the definition of linear independence. When \(S\) is not empty, suppose that there exists \(A \finsub S\) non-empty, \(n_{a} \in \mathbb{N}\), and \(\alpha_{a,i} \in F\) where \(a \in A\) and \(i \in \{1,\cdots , n_{a}\}\) such that \(\sum_{a \in A}\left(\sum_{i=1}^{n_{a}}\alpha_{a,i}\cdot a \right)=0\). Since \(\{ \operatorname{Span} (s)\}_{s \in S}\) is additively independent, it follows that \(\sum_{i=1}^{n_{a}}\alpha_{a,i}\cdot a = 0\) for all \(a \in A\).
\end{proof}

\begin{remark} \label{exa} 
\begin{enumerate} 
\item Clearly, by definition, for any \(v \in V\backslash\{0\}\), \(\{v\}\) is linearly independent.
\item Let \(S \subseteq V\).
In contrast to classical linear algebra, in near-vector space theory, \(t \notin \operatorname{Span}(S)\) is not equivalent to \(\operatorname{Span}(t) \cap \operatorname{Span}(S) = \{0\}\). For example, consider Example \ref{nearfield} (1). We have that \(\operatorname{Span}((1,0,1)) = \mathbb{R} \times \{0\} \times \mathbb{R}\) and \(\operatorname{Span}((0,1,1)) = \{0\} \times \mathbb{R} \times \mathbb{R}\). It's important to note that \((1,0,1) \notin \operatorname{Span}((0,1,1))\) since for any \((x,y,z) \in \operatorname{Span}((0,1,1))\), the \(x\)-coordinate is trivial. However, \((0,0,1) \in \operatorname{Span}((1,0,1))\) because
\[
(0,0,1) = \frac{\sqrt[3]{4}}{\sqrt[3]{3}} \cdot (1,0,1) - \frac{1}{\sqrt[3]{6}}\cdot (1,0,1) - \frac{1}{\sqrt[3]{6}}\cdot (1,0,1)
\]
and \((0,0,1) \in \operatorname{Span}((0,1,1))\) since 
\[
(0,0,1) = \frac{\sqrt[3]{4}}{\sqrt[3]{3}}\cdot (0,1,1) - \frac{1}{\sqrt[3]{6}} \cdot (0,1,1) - \frac{1}{\sqrt[3]{6}}\cdot (0,1,1),
\]
so 
\[
\operatorname{Span}((1,0,1)) \cap \operatorname{Span}((0,1,1)) \neq \{(0,0,0)\}.
\]
This illustrates that in near-vector space theory, the span behaves differently than in classical linear algebra. Specifically, we can decompose \(V\) as follows: \(W_1= \{ (x,y, 0) \mid x, y \in \mathbb{R}\}\) and \(W_2= \{ (0,0, z) \mid z \in \mathbb{R}\}\). \(W_1\) and \(W_2\) are \(F\)-subspaces of \(V\) and \(V=W_1\oplus W_2\). Additionally, \(Q(V)= W_1 \cup W_2\). Furthermore, if \(v \in W_i\) for some \(i \in \{ 1, 2\}\), then \(\operatorname{Span}(v)= F\cdot v \). It's clear that \(\operatorname{dim} (v)\) equals the minimal number of non-zero \(w_i\)'s when \(v = w_1+w_2\), where \(w_i \in W_i\) and \(i \in \{ 1, 2\}\). 
  \item   
  Given \(S \subseteq V\) and \(q \in Q(V)\), then \(q \notin \operatorname{Span}(S)\) is  equivalent to \(\operatorname{Span}(q) \cap \operatorname{Span}(S) = \{0\}\). 
    \item In (2), we have seen that there could be elements \(t\) in \(V\) outside of \(\operatorname{Span}(S)\) for some \(S\) subset of \(V\) and still have \(S \cup \{t\}\) be linearly dependent. However, if \(S\) is a linearly independent subset of \(V\) and \(q \in Q(V)\) with \(q \notin \operatorname{Span}(S) \), then \(\{q \} \cup S\) is linearly independent. This is immediately evident due to the direct sum property established in the statement (3) of Lemma \ref{QL}.
\item Let \(S \subseteq V\). It's worth noting that \(S\) being linearly independent is not equivalent to the following statement: for all \(A \finsub S\) non-empty, \(\alpha_a \in F\) where \(a \in A \), \(\sum_{a \in A } \alpha_a \cdot a =0\) implies \(\alpha_a =0 \) for all \(a\in A\). For instance, using the same example as in (2), consider the vectors \((1, 0, 1)\) and \((0, 0, 1)\). Suppose that for some \(\alpha, \beta \in \mathbb{R}\), we have 
\[
\alpha \cdot (1, 0, 1) +\beta \cdot (0, 0, 1) =(0, 0,0).
\]
Then, \(\alpha^3 + \beta^3=0\), and since \(\alpha=0\), we deduce that \(\alpha= \beta=0\). However, as seen in (2), \((0, 0, 1)\in \operatorname{Span} ((1, 0, 1))\), so the set \(\{ (1, 0, 1),(0, 0, 1)\}\) is not linearly independent. 
\item Let \(v \in V \backslash \{ 0\} \) and \(\Theta\subseteq Q(V)\) with \(|\Theta|=\operatorname{dim}(v)\) such that \(v = \sum_{q \in \Theta } q\). It follows that \(\Theta\) is linearly independent. This can be easily proved by contradiction.
\end{enumerate}
\end{remark}
The following result describes the span of elements in the quasi-kernel.

\begin{lemma}
\label{L=sum}
Let \(S \subseteq Q(V)\). Then \(\operatorname{Span}(S) = \sum_{s \in S} F\cdot s\). Moreover, the following statements are equivalent:
\begin{enumerate}
    \item \(S\) is linearly independent; 
    \item for any non-empty \( A \finsub S\) and \(\alpha_a\in F\) where \(a \in A\), if \(\sum_{a\in A} \alpha_a \cdot a =0\), then \(\alpha_a=0\) for all \(a \in A\)
    \item \(0 \notin S\) and \(\operatorname{Span}(S) = \bigoplus_{s \in S}F\cdot s\);
    \item \(0 \notin S\) and \( s\notin \operatorname{Span}(S\backslash \{s\})\);
    \item \(0 \notin S\) and \(\{ F\cdot s \}_{s\in S}\) is additively independent. 
\end{enumerate}
\end{lemma}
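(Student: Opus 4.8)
The plan is to treat the displayed identity $\operatorname{Span}(S)=\sum_{s\in S}F\cdot s$ first, and then to obtain the equivalences by transporting Lemma \ref{QL} across this identity, item by item. For the identity: Lemma \ref{spanlin} gives $\operatorname{Span}(S)=\sum_{s\in S}\operatorname{Span}(s)$, and since each $s\in S$ lies in $Q(V)$, Lemma \ref{scalar} gives $\operatorname{Span}(s)=F\cdot s$; substituting yields the formula, the only thing worth noting being that each $F\cdot s$ is itself a subspace because $F$ acts by endomorphisms.

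Once $\operatorname{Span}(s)=F\cdot s$ is available for every $s\in S$, three of the equivalences come essentially for free from Lemma \ref{QL}: statement (3) here is literally statement (3) there, statement (5) here is statement (6) there (additive independence of $\{F\cdot s\}_{s\in S}=\{\operatorname{Span}(s)\}_{s\in S}$), and statement (4) here is statement (5) there, rewritten in terms of membership instead of intersection. For that rewrite one checks that $\operatorname{Span}(s)\cap\operatorname{Span}(S\backslash\{s\})=\{0\}$ is equivalent to $s\notin\operatorname{Span}(S\backslash\{s\})$: if $s\in\operatorname{Span}(S\backslash\{s\})$ then $s$ is a nonzero element of the intersection, and conversely if a nonzero $\alpha\cdot s$ lies in $\operatorname{Span}(S\backslash\{s\})$ then $\alpha\neq 0$, so --- using that $F\backslash\{0\}$ is a group --- $s=\alpha^{-1}\cdot(\alpha\cdot s)$ lies in $\operatorname{Span}(S\backslash\{s\})$ by closure under scalar multiplication, a contradiction. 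This is the only place the group structure of $F$ (rather than just its monoid structure) is genuinely used.

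The remaining equivalence (1)$\Leftrightarrow$(2) is the new content, and I would prove it directly from Definition \ref{linindep}. The mechanism is that for $a\in Q(V)$ an inner sum collapses to a single scalar multiple, $\sum_{i=1}^{n_a}\alpha_{a,i}\cdot a=\bigl({}^{a}\sum_{i=1}^{n_a}\alpha_{a,i}\bigr)\cdot a$ --- exactly the computation already performed in the proof of Lemma \ref{scalar}, $(3)\Rightarrow(1)$. Hence the linear combinations $\sum_{a\in A}\bigl(\sum_{i}\alpha_{a,i}\cdot a\bigr)$ range over precisely the sums $\sum_{a\in A}\gamma_a\cdot a$ with $\gamma_a\in F$, and the conclusion ``$\sum_i\alpha_{a,i}\cdot a=0$'' becomes ``$\gamma_a\cdot a=0$'', which by freeness of the action together with $0$ acting trivially is equivalent to ``$\gamma_a=0$'' whenever $a\neq 0$; this matches condition (2) verbatim. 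One small bookkeeping point is that (2) on its own already forces $0\notin S$ (apply it to $A=\{0\}$ with scalar $1$), so nothing in the definition of linear independence is left unaccounted for.

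I do not expect a serious obstacle: the argument is essentially a dictionary translation of Lemma \ref{QL} together with the observation that a quasi-kernel vector absorbs sums of its own scalar multiples. The step that needs the most care is (1)$\Leftrightarrow$(2), where one must be explicit both about the collapse of arbitrary linear combinations into single scalar multiples and about the passage between ``$\gamma\cdot a=0$'' and ``$\gamma=0$'' via freeness of the scalar action.
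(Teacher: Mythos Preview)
Your proposal is correct and follows the same approach as the paper: deduce $\operatorname{Span}(s)=F\cdot s$ from Lemma~\ref{scalar}, combine with Lemma~\ref{spanlin} for the displayed identity, and then read off the equivalences from Lemma~\ref{QL}. The paper's proof in fact says only that the equivalences ``follow directly from Lemma~\ref{QL}'', so your explicit treatment of (1)$\Leftrightarrow$(2) via collapsing inner sums and freeness, and of (4) via the membership/intersection rewrite using that $F\backslash\{0\}$ is a group, supplies detail that the paper leaves implicit.
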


\begin{proof} 
Suppose \(S \subseteq Q(V)\). By Lemma \ref{scalar}, \(\operatorname{Span} (s)=  F\cdot s\) for any \(s\in Q(V)\). Therefore,  \(\operatorname{Span}(S) = \sum_{s \in S} F\cdot s\) follows from Lemma \ref{spanlin}, and the equivalence of statements \((1)\) to \((5)\) directly follows from Lemma \ref{QL}. 
\end{proof}

Let's now define a general notion of a generating set that allows us to consider sets outside of an \(F\)-subspace.

\begin{definition}
Let \(V\) be an \(F\)-space and \(W\) be an \(F\)-subspace of \(V\). A subset \(S\) of \(V\) is called a \textbf{generating set for \(W\)} if \(W \subseteq  \operatorname{Span}(S)\). 
\end{definition}

\begin{remark}
\begin{enumerate} 
\item Note that \(\emptyset\) is a generating set for \(\{0\}\). 
\item When \(S\subseteq W\), then \(S\) is a generating set of \(W\) if and only if \(\operatorname{Span}(S)=W\).
\item When \(V\) is a near-vector space, \(Q(V)\) is a generating set for \(V\).
\end{enumerate}
\end{remark} 

Now we can define two notions of a basis: the \(F\)-basis and the scalar \(F\)-basis. In classical linear algebra, these two concepts coincide. However when considering near-vector spaces, this distinction becomes fundamental.

\begin{definition}
\label{basis}
Let \(V\) be an \(F\)-space. \begin{enumerate} 
\item The set \(S\) is called an \textbf{\(F\)-basis for \(V\)} (or simply \textbf{basis} when there is no confusion) if \(S \subseteq V\), \(S\) is a generating set for \(V\), and \(S\) is  linearly independent. 
\item A \textbf{scalar \(F\)-basis} (or simply \textbf{scalar basis} when there is no confusion) is a set that is a basis whose elements are scalar. 
\item $V$ is said to be \textbf{finite-dimensional} if $V$ admits a finite scalar $F$-basis.
\item When \(V\) is a finite-dimensional near-vector space, we say that a basis \(B\) is a \textbf{minimal (resp. maximal) basis for \(V\)}, if \(|B|= \operatorname{min} \{ |C| \mid C \text{ basis of } V\}\) (resp. \(|B|= \operatorname{max} \{ |C| \mid C \text{ basis of } V\}\)).
\end{enumerate}
\end{definition}

\begin{remark}
Note that \(\emptyset\) is a scalar \(F\)-basis for \(\{0\}\). 
\end{remark}

 \section{Key span results}

The two main results of this paper are Theorem \ref{minimal} and Corollary \ref{spanlemma}. These results illuminate the fundamental principles of the theory of near-vector spaces. The significance of Theorem \ref{minimal} becomes evident in the proof of Theorem \ref{nvs}, establishing the proposition that any \(F\)-subspace is indeed a near-vector space. Meanwhile, Corollary \ref{spanlemma} plays a pivotal role in the demonstration of Theorem \ref{quotientnvs}, which asserts that a quotient by an \(F\)-subspace also constitutes a near-vector space. Notably, the initial establishment of Theorem \ref{nvs} was achieved within the framework of division rings in \cite{HowellMarques}. Subsequently, its validity was extended to general cases through a geometric approach in \cite{HR22}. Independently, in this paper, we present a direct generalization of the version in \cite{HowellMarques} to prove Theorem \ref{nvs}. 

Upon sharing our paper on arXiv, we became aware of Dr.~Wessels' work \cite[Theorem 3.3.7]{Wessels}. However, we wish to acknowledge that the proof of Theorem \ref{quotientnvs} as presented in \cite[Theorem 3.3.7]{Wessels} has been identified as incomplete. This issue has been acknowledged and agreed upon by the authors involved. Specifically, upon closer examination, Dr.~Marques identified a crucial missing element in their argument. This critical component is adequately provided by our Corollary \ref{spanlemma}, which essentially acts as the missing link required to achieve the completeness of the proof presented in \cite{Wessels}. We are also grateful to Prof.~Wadsworth for his insightful suggestions, which prompted us to reorganize the initial proof of Theorem \ref{minimal} and led us to incorporate the following lemma as part of our revised approach.
\begin{lemma}\label{mins}
Let $v \in V \backslash \{ 0\} $ and $\Theta\subseteq Q(V)$ with $|\Theta|=\operatorname{dim}(v)$ such that $v = \sum_{q \in \Theta } q$. For every $w\in \operatorname{Span} (v)$, we define $\gamma_{q,w} \in F$ for all $q\in \Theta$ such that $w = \sum_{q \in \Theta} \gamma_{q,w} \cdot q $ and $\Gamma_w :=\{\gamma_{q,w} \mid \gamma_{q,w} \neq 0, \ q \in \Theta \}$. Let $s \in \operatorname{Span} (v)$ such that $|\Gamma_s| = \operatorname{min}\{ |\Gamma_w| \mid w\in \operatorname{Span}(v)\backslash \{ 0\}\}$. Then,
\begin{enumerate}
    \item $s \in Q(V)$;
    \item $\operatorname{Span}(\Theta)= \operatorname{Span}( \{s\} \cup \Theta \backslash \{ q_0\})$, for all $q_0 \in \Theta $ such that $\gamma_{q_0,s} \neq 0$.
\end{enumerate}
\end{lemma}
\begin{proof}
Let $\Theta \subseteq Q(V)$ with $|\Theta|=\operatorname{dim}(v)$ such that $v = \sum_{q \in \Theta } q$ and $s\in \operatorname{Span} (v)$ such that $s = \sum_{q \in \Gamma_s} \gamma_q \cdot q $ and $|\Gamma_s| = \operatorname{min}\{ |\Gamma_w| \mid w\in \operatorname{Span}(v)\backslash \{ 0\}\}$.
\begin{enumerate}
    \item  We denote $|\Gamma_s|$ by $m$. If $m=1$, then $s\in Q(V)$. Consider $m \geq 2$. Suppose that $+_{\gamma_q \cdot q} = +_{\gamma_{q'} \cdot q'} $ for all $q, q' \in \Gamma_s$. Then $s \in Q(V)$ by Lemma \ref{pluses1}. Now, suppose that there exists $q_1, q_2 \in \Gamma_s$ such that $+_{\gamma_{q_1} \cdot q_1} \neq +_{\gamma_{q_2} \cdot q_2}$. Then, we can choose some $\alpha, \beta \in F$ such that $\alpha +_{ \gamma_{q_1} \cdot q_1} \beta \neq \alpha +_{ \gamma_{q_2} \cdot q_2} \beta$. Let $t = \alpha \cdot s + \beta\cdot s - (\alpha +_{\gamma_{q_1} \cdot q_1} \beta)\cdot s $, so that $t \in \operatorname{Span}(s) \subseteq \operatorname{Span}(v)$. Then 
$$t= \sum_{q \in \Gamma_s} \delta_q \cdot (\gamma_{q} \cdot q)$$
 where $\delta_q = ( \alpha +_{\gamma_q \cdot q} \beta) 
 -_{\gamma_q \cdot q} ( \alpha +_{\gamma_{q_1} \cdot q_1} \beta),$ for all $q\in \Gamma_s$.
 So, $\delta_{q_1}=0$ while $\delta_{q_2} \neq 0$. Hence, $t \neq 0$, since $\Gamma_s$ is linearly independent. This contradicts the minimality assumption. So, the final case cannot occur, proving (1).
\item By (1), we have $s\in Q(V)$. We set $\Theta':=\{s\} \cup \Theta \backslash \{ q_0\}$ where $q_0 \in \Theta $ such that $\gamma_{q_0} \neq 0$. We have 
$\operatorname{Span}(\Theta)= \operatorname{Span}(\Theta')$. Indeed, $\Theta' \subseteq \operatorname{Span}(\Theta)$ proving the inclusion $\operatorname{Span}(\Theta')\subseteq \operatorname{Span}(\Theta)$. In the other inclusion, we prove that $\Theta \subseteq \operatorname{Span}(\Theta')$. It is enough to prove that $q_0 \in \operatorname{Span}(\Theta')$, which is the case, since $q_0 = \gamma_{q_0}^{-1} ( s -\sum_{q \in \Theta \backslash \{q_0\}} \gamma_q \cdot q  )$.
 \end{enumerate}
\end{proof}

\begin{thm} 
\label{minimal}
Let $v \in V \backslash \{ 0\} $. There is $\Theta\subseteq Q(V)$ with $|\Theta|=\operatorname{dim}(v)$ such that $v = \sum_{q \in \Theta } q$ and  
\begin{equation*}
   \operatorname{Span}(v) = \operatorname{Span}(\Theta) =  \bigoplus_{q\in \Theta}F\cdot q.
\end{equation*}
\end{thm}

\begin{proof} 
We prove the theorem by induction on $\operatorname{dim}(v)$. When $\operatorname{dim}(v)=1$, then $v \in Q(V)$ and one can simply take $\Theta= \{v\}$. Now assume that $\operatorname{dim}(v)>1$. Suppose that the theorem is satisfied for any $v \in V\backslash \{ 0\} $ such that $\operatorname{dim}(v)<k$. We prove that the result remains true for any $v\in V\backslash \{ 0\} $ such that $\operatorname{dim}(v)=k$. Let $v \in V $ such that $\operatorname{dim}(v)=k$. By Lemma \ref{mins}, we can choose $\Theta \subseteq Q(V)$ such that $\Theta \cap \operatorname{Span}(v) \neq \emptyset$, $|\Theta|= \operatorname{dim}(v)$ and $\operatorname{Span}(v)= \operatorname{Span}(\Theta)$. Say $q_{0} \in \operatorname{Span}(v)\cap \Theta$. Since $v \in \operatorname{Span}(\Theta)$, we can write $v= \gamma_0 \cdot q_0 + w$ where $\gamma_0 \in F$ and $w\in  \operatorname{Span}(\Theta \backslash \{ q_0 \})$. Then, $\operatorname{dim}(w) \leq k-1 < \operatorname{dim}(v)$. The induction hypothesis on $\operatorname{dim}(v)$ yields that there is $\Theta_w \subseteq Q(V)$ with $|\Theta_w| = \operatorname{dim}(w)$ and $\operatorname{Span}( \Theta_w) = \operatorname{Span}(w)$. Let $\Theta_v =\{ q_0 \} \cup \Theta_w\subseteq Q(V)$. Then, $\operatorname{Span}( v)\subseteq \operatorname{Span}( \Theta_v)$, since $v = \gamma_0 \cdot q_0 + w$ with $q_0, w \in \operatorname{Span}( \Theta_v)$. For the reverse inclusion, we observe that $w\in \operatorname{Span}(v)$ since $w= v - \gamma_0 \cdot q_0$ and $q_0 \in \operatorname{Span}( v)$ by definition. So that $ \operatorname{Span}(w) \subseteq \operatorname{Span}( v)$. Now, since $\operatorname{Span}( \Theta_w) = \operatorname{Span}(w)$ and $q_0 \in \operatorname{Span}( v)$, we can deduce that $\Theta_v \subseteq \operatorname{Span}( v)$. Thus we obtain $\operatorname{Span}( v) = \operatorname{Span}( \Theta_v)$.
Finally, $|\Theta_v | = |\Theta_w |+1 =\operatorname{dim}( w)+1= \operatorname{dim}(v)  $. Since $\operatorname{Span}(\Theta) = \bigoplus_{q \in \Theta}F\cdot q$ (by Lemma \ref{L=sum}), the result follows.
\end{proof}

\begin{remark}\label{thebas}
Let $v \in V \backslash \{ 0\} $. By Theorem \ref{minimal}, there is $\Theta\subseteq Q(V)$ with $|\Theta|=\operatorname{dim}(v)$ such that $v = \sum_{q \in \Theta } q$ and $\Theta \subseteq Q(\operatorname{Span}(v))$. Moreover, $\Theta$ is a scalar basis for $\operatorname{Span}(v)$.
\end{remark} 

The following lemma is an adaptation of the classical linear algebra proof for near-vector spaces, which shows that any vector space admits a basis (see for instance \cite[Theorem 7.2.2]{Lightstone}). The proof will make use of Zorn's Lemma (see for instance \cite[Theorem 3.5.6]{Bloch}).

\begin{lemma}
\label{generatingset} Let $S\subseteq Q(V)$ be a generating set for $V$. Then there exists $B\subseteq S$ such that $B$ is a scalar basis.
\end{lemma}
\begin{proof}
When $V = \{0\}$, then $S = \emptyset$ is a basis for $V$. Suppose that $V \neq \{0\}$. Consider the set $\mathcal{L}$ of all linearly independent subsets of $S$. Then $\mathcal{L}$ is non-empty. Indeed, there is a non-zero element $s$ in $S$, since $V \neq \{0\}$, so that $\{s\} \subseteq S$ is a linearly independent set. Let $\mathcal{C}$ be a chain $L_{1} \subseteq L_{2} \subseteq \cdots \subseteq L_{i} \subseteq L_{j} \subseteq \cdots $ where $L_{i} \subseteq \mathcal{L}$ for each $i \in I$. We now prove that $\bigcup_{i \in I}L_{i}$ is an upper bound for $\mathcal{C}$. We have that $L_{i} \subseteq \bigcup_{i \in I}L_{i}$ for all $i \in I$. We want to show that $\bigcup_{i \in I}L_{i}$ is a linearly independent set. Since $0 \notin L_{i}$ for all $i\in I$, $0 \notin \bigcup_{i \in I}L_{i}$. Let $A=\{a_{1},\cdots, a_{n}\} \subseteq \bigcup_{i \in I}L_{i}$ with $A$ being non-empty, $n_{a} \in \mathbb{N}$ with $a \in A$, for all $\alpha_{a,k} \in F$ where $a \in A$ and $k \in \{1, \cdots, n_{a}\}$ such that $\sum_{a \in A} \left(\sum_{k=1}^{n_{a}}\alpha_{a,k} \cdot a \right) = 0$. Then for each $j \in \{ 1, \cdots, n\}$, we have $a_{j} \in L_{i_{j}}$ for some $i_{j} \in I$. Let $i_{0} = \operatorname{max} \{ i_{j} \mid j \in \{1,\cdots, n\}\}$. Then $ A \subseteq L_{i_{0}}$. Since $L_{i_{0}}$ is a linearly independent set, this implies that $\sum_{k=1}^{n_{a}}\alpha_{a,k} \cdot a = 0$ and so $\bigcup_{i \in I}L_{i}$ is linearly independent. Therefore $\bigcup_{i \in I}L_{i}$ is an upper bound for $\mathcal{L}$. Therefore, by Zorn's Lemma, $\mathcal{L}$ has a maximal element, say $B$. We prove that $B$ is a basis for $V$. To do this, it is enough to prove that $B$ is a generating set of $V$, since we already know that $B$ is linearly independent. We argue by contradiction. Suppose $B$ is not a generating set of $V$. Then there exists $v \in V$ such that $v \notin \operatorname{Span}(B)$. 
Thus, $\{v \} \cup B$ is linearly independent since $v \in Q(V)$, by Remark \ref{exa}, (4). Also, $B \subsetneq \{v\} \cup B$, contradicting the maximality of $B$. Therefore $B$ is a generating set of $V$, and $B$ is a scalar basis for $V$. 
\end{proof}

\begin{remark}
    \begin{enumerate} 
    \item Note that Lemma \ref{generatingset} is generally not true if $S$ is a generating set of $V$ that contains elements that are not scalar. For example, consider the near-vector space example given in Remark \ref{exa}, (2). Note that $\{(1,0,1),(0,1,1)\}$ generates $V$, since $\operatorname{Span}(\{(1,0,1),(0,1,1)\}) = V$. Indeed, we have $V \subseteq \operatorname{Span}(\{(1,0,1),(0,1,1)\})$ since $\{(1,0,0),(0,1,0),(0,0,1)\}$ is a scalar basis for $V$ and $\{(1,0,0),(0,1,0),(0,0,1)\} \subseteq \operatorname{Span}(\{(1,0,1),(0,1,1)\})$. However, $\{(1,0,1),(0,1,1)\}$ is not linearly independent, since $(0,0,1) \in \operatorname{Span}((1,0,1))\cap \operatorname{Span}((0,1,1))$ (as shown in Remark \ref{exa}, (2)). Since $(1,0,1)$ and $(0,1,1)$ do not generate $V$, no proper subset of $\{(1,0,1),(0,1,1)\}$ generates $V$. Hence, no subset of $\{(1,0,1),(0,1,1)\}$ is a basis of $V$.
    \item Not all $F$-bases have the same cardinality. Indeed, using the same example as in Remark \ref{exa}, (2), we have $\{ (1, 0, 0) , (0, 1,1)\}$ as a basis for $\mathbb{R}^3$ equipped with the scalar multiplication $\smallstar$. Also, $\{ (1, 0, 0), (0, 1,0) , (0, 0,1)\}$ is a scalar basis. However, these two bases do not have the same cardinality. 
    \end{enumerate}
\end{remark}
Since a near-vector space is generated by its quasi-kernel, the following corollary can be deduced from Lemma \ref{generatingset}.
\begin{corollary}\label{nvsbasis}
Every near-vector space admits a scalar basis. 
\end{corollary}
The subsequent lemma establishes that scalar bases are also maximal bases. This can be readily demonstrated, given that we have previously established in Theorem \ref{minimal} that any element in $V$ can be expressed as a direct sum of scalar elements.
\begin{corollary}
When \(V\) is a finite-dimensional near-vector space, any scalar basis for \(V\) is also a maximal basis. 
\end{corollary}


The ensuing result is an adaptation of the Replacement Theorem for vector spaces, tailored for near-vector spaces (see for instance \cite[Theorem 7.2.1]{Lightstone}).

\begin{lemma}
\label{replacement}
Let $S, T \finsub Q(V)$, where $S$ is linearly independent and $\operatorname{Span}(S) \subseteq \operatorname{Span}(T)$. Then, $|S| \leq |T|$ and there exists $T_0 \subseteq T$ such that $ \operatorname{Span}(S \cupdot T_0) = \operatorname{Span}(T)$, where $S \cupdot T_0$ denotes the disjoint union of $S$ and $T_0$.
\begin{proof}
Consider an element $s \in S$. Since $S \subseteq \operatorname{Span}(S) \subseteq \operatorname{Span}(T)$, it follows that $s \in \operatorname{Span}(T)$. Since $T\subseteq Q(V)$, as established in Lemma \ref{L=sum}, we can express $s = \sum_{t \in T} \alpha_{t}\cdot  t$, where $\alpha_{t} \in F$ for all $t \in T$. As $s \neq 0$, at least one of the $\alpha_{t}$'s must be non-zero; let's denote this as $\alpha_{t_{1}} \neq 0$ with $t_{1} \in T$. Consequently, we have $t_{1} = \alpha_{t_{1}}^{-1} \left(s - \sum_{t \in T \backslash \{t_{1}\}} \alpha_{t} \cdot t \right)$.
This implies $t_{1} \in \operatorname{Span}(\{s \}\cup T\backslash \{t_{1}\})$, which leads to $T \subseteq \operatorname{Span}(\{s\}\cup T\backslash \{t_{1}\})$. Therefore, $\operatorname{Span}(T) \subseteq \operatorname{Span}(\{s\}\cup T\backslash \{t_{1}\})$ by virtue of Remark \ref{smallest}. As $s \in \operatorname{Span}(T)$, we then have $\{s\}\cup T\backslash \{t_{1}\} \subseteq \operatorname{Span}(T)$. 
This implies that $\operatorname{Span}(\{s\}\cup T\backslash \{t_{1}\}) \subseteq \operatorname{Span}(T)$ according to Remark \ref{smallest}. Ultimately, we deduce $\operatorname{Span}(\{s\}\cup T\backslash \{t_{1}\}) = \operatorname{Span}(T)$.
By applying this process recursively to each element of $S$, we obtain $\operatorname{Span}(T) = \operatorname{Span} ( S \cup T')$, where $T' \subseteq T$ and $|T'|= |T|-|S|$. This yields $|S|\leq |T|$. By setting $T_0=T'\backslash (S \cap T')$, we ensure $\operatorname{Span}(T) = \operatorname{Span} ( S \cupdot T_0)$. 
\end{proof}
\end{lemma}

A direct consequence of Lemma \ref{replacement} is the following corollary.

\begin{corollary}\label{samecar}
Every scalar basis has the same cardinality. 
\end{corollary}

\begin{remark}\label{dimvtheta}
Let $v \in V\backslash \{ 0 \}$. For any $\Theta \subseteq Q(V)$ such that $\operatorname{Span}(v) = \oplus_{q \in \Theta}F \cdot q$, it follows that $|\Theta| = \operatorname{dim}(v)$. This conclusion can be drawn from the fact that $\Theta$ is both linearly independent and generates $\operatorname{Span}(v)$, thus making $\Theta$ a scalar basis for $\operatorname{Span}(v)$. By Corollary \ref{samecar}, we deduce that every scalar basis has the same cardinality, and the equality $|\Theta| = \operatorname{dim}(v)$ emerges from Theorem \ref{minimal} and Remark \ref{thebas}. In other words, for any scalar basis $\Theta$, $|\Theta| = \operatorname{dim}(v)$.
\end{remark}

With the aforementioned concepts established, a near-vector space can be understood as an abelian group endowed with a scalar group action that possesses a scalar basis.

\begin{thm}
\label{scalarbasis}
Let $V$ be an $F$-space. 
Then $V$ is a near-vector space over $F$ if and only if $V$ admits a scalar basis over $F$.
\end{thm}

\begin{proof} 
Assume $V$ is a near-vector space over $F$. Consequently, by Corollary \ref{nvsbasis}, $V$ possesses a scalar basis over $F$. Conversely, assume $V$ admits a scalar basis over $F$. Let $B= \{ b_i\}_{i \in I}$ be a scalar basis for $V$ over $F$. As a result, the scalar group action is free. Specifically, if $\alpha \in F$ and $v \in V$ such that $\alpha \cdot v = v$, we can deduce, via the definition of scalar basis and Lemma \ref{scalar}, that $B \subseteq Q(V)$. Additionally, there exists $C \finsub B$ with $\alpha_c \in F$ for all $c \in C$ such that $v = \sum_{c \in C} \alpha_c \cdot c$. Consequently, the equation $\alpha\cdot v = v$ can be rewritten as: $\alpha \cdot v = \alpha \cdot \left( \sum_{c \in C} \alpha_c \cdot c\right) = \sum_{c \in C} (\alpha\cdot  \alpha_c )\cdot c= \sum_{c \in C} \alpha_c \cdot c =v$. Equivalently, we have $\sum_{c \in C} (\alpha \cdot \alpha_c +_c (- \alpha_c))\cdot c=0$. This implies that $\alpha \cdot \alpha_c = \alpha_c$, as $C$ is a linearly independent set. This, in turn, is equivalent to $\alpha_c=0$ for all $c \in C $ or $\alpha=1$, given that $F\backslash \{ 0\} $ forms a group. Finally, in light of the scalar basis definition, it is clear that $V$ is additively generated by $Q(V)$. Hence, $V$ is a near-vector space over $F$. 
\end{proof} 

The following theorem provides a characterization for equality of spans:

\begin{thm}\label{dimspan}
Let $v \in V \backslash \{ 0\}$ and $w\in \operatorname{Span} (v)$. The following statements are equivalent:
\begin{enumerate} 
\item $\operatorname{Span} (v) = \operatorname{Span} (w)$; 
\item $\operatorname{dim} (v)= \operatorname{dim} (w)$;
\item For every scalar basis $\Theta $ of $\operatorname{Span} (v)$, if $w= \sum_{q \in \Theta}  \gamma_q \cdot q$ where $\gamma_q \in F$ for all $q \in \Theta$, then $\gamma_q\neq 0 $ for all $q \in \Theta$.
\end{enumerate}
\end{thm}

\begin{proof} 
$(1)\Rightarrow (2):$ The equivalence $\operatorname{dim} (v)= \operatorname{dim} (w)$ holds true due to all scalar bases of $\operatorname{Span} (v)$ having the same cardinality, as stated by Corollary \ref{samecar}. 

$(2)\Rightarrow (3):$ Let $\Theta $ be a scalar basis of $\operatorname{Span} (v)$ and suppose $w= \sum_{q\in \Theta} \gamma_q \cdot q$ where $\gamma_q \in F$ for all $q \in \Theta$. Since $|\Theta| = \operatorname{dim}(v)$, as per Remark \ref{dimvtheta}, $\gamma_q\neq 0 $ for all $q \in \Theta$, lest it contradict the definition of $\operatorname{dim} (w)$, given that $|\Theta|=\operatorname{dim} (v)= \operatorname{dim} (w)$.

$(3)\Rightarrow (1):$ Utilizing Theorem \ref{minimal}, we deduce the existence of $\Theta_v$ and $\Theta_w \finsub Q(V)\backslash \{ 0\}$, with $|\Theta_{v}| = \operatorname{dim}(v)$ and $|\Theta_{w}| = \operatorname{dim}(w)$, that allow us to express 
$ \operatorname{Span} (v) = \oplus_{q\in \Theta_v} F\cdot q$
and 
 $\operatorname{Span} (w) = \oplus_{q\in \Theta_w} F\cdot q$.
Since $\operatorname{Span} (w)  \subseteq \operatorname{Span} (v)$, there exists $\Theta \subseteq Q(V)\backslash  \operatorname{Span} (w)$, such that 
$$ \operatorname{Span} (v) = \left( \oplus_{q\in \Theta_w} F\cdot q \right) \oplus \left( \oplus_{q\in \Theta} F\cdot q \right)= \operatorname{Span} (w) \oplus \left( \oplus_{q\in \Theta} F\cdot q \right),$$
owing to Lemma \ref{replacement}. Consequently, $\Theta_w \cup \Theta$ and $\Theta_v$ form scalar bases for $\operatorname{Span} (v)$. This means that $w = \sum_{q\in \Theta_w\cup \Theta} \gamma_q \cdot q $, where $\gamma_q \in F$ for all $q\in \Theta_w\cup \Theta$. As $|\Theta_{w}| = \operatorname{dim}(w)$, it follows that $\gamma_q=0$ for all $q \in \Theta$. For assumption $(3)$ to be satisfied, $\Theta$ must be empty, and thus $\operatorname{Span} (v) =  \operatorname{Span} (w)$.
\end{proof} 

\begin{corollary}\label{division}
Let $v\in V\backslash \{0 \}$. Suppose that for all $w\in \operatorname{Span} (v)\backslash \{ 0 \}$, $(F, +_w , \cdot)$ is a division ring. Let $A \finsub F$. There is $\Theta \subseteq Q(V) \backslash \{0\}$ such that $\operatorname{Span} (v) =\oplus_{q \in \Theta} F\cdot v$ (see Theorem \ref{minimal}). Then $\operatorname{Span} (v) = \operatorname{Span} (\sum_{\alpha \in A} \alpha \cdot v)$ if and only if ${}^q \sum_{\alpha \in A} \alpha \neq 0$ for all $q\in \Theta.$ 

\begin{proof}
We notice that
$$\sum_{\alpha \in A} \alpha \cdot v= \sum_{\alpha \in A} \alpha \cdot \left( \sum_{q \in \Theta} q \right) = \sum_{q \in \Theta}  ({}^q \sum_{\alpha \in A} \alpha) \cdot q.$$
Because $|\Theta| = \operatorname{dim}(v)$, according to Remark \ref{dimvtheta}, if ${}^q \sum_{\alpha \in A} \alpha=0$ for some $q \in \Theta$, then $\operatorname{dim} (\sum_{\alpha \in A} \alpha \cdot v)< \operatorname{dim}(v)$. This implies $\operatorname{Span} (v) \neq \operatorname{Span} (\sum_{\alpha \in A} \alpha \cdot v)$, as indicated by Theorem \ref{dimspan}. Conversely, if ${}^q \sum_{\alpha \in A} \alpha\neq 0$ for all $q \in \Theta$, by the Decomposition Theorem (see \cite[Lemma 4.13]{Andre}), we can express $V = \sum_{i=1}^r V_i$ as the regular decomposition of $V$, where $V_i$ are the regular components of $V$ for $i\in \{ 1, \cdots, r\}$. As per \cite[Theorem 4.2]{HowellMarques}, for any distinct $q, q' \in \Theta$, there are distinct integers $i_q$ and $i_{q'}\in \{ 1, \cdots, r\}$ such that $q \in V_{i_q}$ and $q' \in V_{i_{q'}}$. Consequently, ${}^q \sum_{\alpha \in A} \alpha \cdot q \in V_{i_q}$. According to \cite[Theorem 4.5]{HowellMarques}, this results in $\operatorname{Span}(\sum_{\alpha \in A} \alpha \cdot v )=\oplus_{q \in \Theta} ({}^q \sum_{\alpha \in A} \alpha)\cdot q $. This leads to $|\Theta| =\operatorname{dim}(v) = \operatorname{dim} (\sum_{\alpha \in A} \alpha \cdot v ) $. Thus, $\operatorname{Span}(v)  = \operatorname{Span}\left(\sum_{\alpha \in A} \alpha \cdot v \right)$, by virtue of Theorem \ref{dimspan}, as desired.
\end{proof} 
\end{corollary}

From the proof of Corollary \ref{division}, we can observe certain interesting facts that are explained in the subsequent remark.

\begin{remark}
Let $v\in V\backslash \{0 \}$. Suppose that for all $w\in \operatorname{Span} (v)\backslash \{ 0 \}$, $(F, +_w , \cdot)$ is a division ring. 
\begin{enumerate} 
\item Let $A \finsub F$.  By Theorem \ref{minimal}, there exists $\Theta \subseteq Q(V) \backslash \{0\}$ such that $\operatorname{Span} (v) =\oplus_{q \in \Theta} F\cdot v$. As indicated by the proof of Corollary \ref{division}, we have 
$$ \operatorname{Span}\left(\sum_{\alpha \in A} \alpha \cdot v \right) = \oplus_{q \in \Theta_0} F \cdot q$$ where $\Theta_0 = \{ q \in \Theta \mid {}^q \sum_{\alpha \in A} \alpha \neq 0\} $. 
In particular, $ \operatorname{dim}(\sum_{\alpha \in A} \alpha \cdot v)= |\Theta_0|.$ 
\item By the proof of Corollary \ref{division}, $\operatorname{dim}(v)$ is always less than or equal to the number of regular components of $V$.
\end{enumerate}
\end{remark}

The subsequent corollary, derived from Theorem \ref{dimspan}, unveils the structural characteristics of the span formed by a linear combination of two elements.

\begin{corollary}
\label{spanlemma}
Let $v \in V \backslash \{ 0\}$ and $\alpha, \beta \in F$ such that $\alpha \neq \beta$. Then $\operatorname{Span}(v) =\operatorname{Span}(\alpha\cdot  v - \beta \cdot  v)$. Moreover, $\operatorname{dim}( v) = \operatorname{dim}(\alpha \cdot v - \beta \cdot v)$.
\end{corollary}

\begin{proof}
Consider a scalar basis $\Theta$ for $\operatorname{Span}(v)$. With Remark \ref{dimvtheta} implying $|\Theta|= \operatorname{dim} (v)$. We have $v = \sum_{q \in \Theta} \gamma_q \cdot q $, where $\gamma_q \in F$ for all $q \in \Theta$. For all $q \in \Theta$, $\gamma_q\neq 0$ due to the definition of $\operatorname{dim} (v)$. Consequently, $\alpha \cdot v - \beta\cdot v = \sum_{q \in \Theta} \delta_q \cdot q $, where $\delta_q = ( \alpha -_{\gamma_q \cdot q} \beta)\cdot \gamma_q $ for all $q\in \Theta$. Because $\gamma_q \neq 0 $ for all $q\in \Theta$ and $\alpha \neq \beta $, this implies $\delta_q \neq 0$. By virtue of Theorem \ref{dimspan}, we then deduce $\operatorname{Span}(v) =\operatorname{Span}(\alpha\cdot  v - \beta \cdot  v)$.
\end{proof}

In the following remark, we illustrate that when forming a linear combination of $v$ involving more than two terms, the span of $v$ might not match the span of this linear combination, even when the linear combination is non-trivial.

\begin{remark}
Building upon the same example as presented in Remark \ref{exa} (2), we can observe that
$$\operatorname{Span} ((0,1,1) ) \neq \operatorname{Span} \left(\frac{\sqrt[3]{4}}{\sqrt[3]{3}}\smallstar (0,1,1) - \frac{1}{\sqrt[3]{6}} \smallstar  (0,1,1) - \frac{1}{\sqrt[3]{6}}\smallstar  (0,1,1)\right) = \operatorname{Span} ((0,0,1) ) ,$$
even though $\frac{\sqrt[3]{4}}{\sqrt[3]{3}} +_{(0,0,1)} \left( -\frac{1}{\sqrt[3]{6}} \right) +_{(0,0,1)} \left(- \frac{1}{\sqrt[3]{6}}\right)= (\frac{4}{3} - \frac{1}{6}-\frac{1}{6})^{1/3} = 1 \neq 0$. This confirms that Corollary \ref{spanlemma} does not hold for a linear combination of more than $2$ terms. This discrepancy arises from the fact that the condition $\alpha\neq -\beta$ is sufficient to imply $\alpha +_q \beta= 0 $ for all $q \in Q(V)$, whereas such a condition is not available for three or more terms.
\end{remark}

\section{Proving the First Isomorphism Theorem for near-vector spaces}

We can now firmly establish the non-trivial result that any $F$-subspace, as defined at the beginning of the paper, is itself a near-vector space. This result emerges as a corollary of Theorem \ref{minimal}.

\begin{thm}
\label{nvs}
Let $W \subseteq V$. Then $W$ is an $F$-subspace of $V$ if and only if $W$ is a near-vector space over $F$ with the same operations as $V$.
\end{thm}

\begin{proof}
Let $W \subseteq V$. Suppose $W$ is an $F$-subspace of $V$. We only need to prove that the quasi-kernel of $W$ generates $W$ as an abelian group. Let $w \in W \backslash \{ 0\} $. Then by Theorem \ref{minimal}, there is $\Theta \finsub Q(V)$ with $|\Theta|= \operatorname{dim}(w)$ such that $w = \sum_{q \in \Theta } q$ and
$\operatorname{Span}(w) = \operatorname{Span}(\Theta)$. By Remark \ref{Q(W)subsetQ(W')} and Remark \ref{thebas}, $\Theta \subseteq Q(\operatorname{Span}(w)) \subseteq Q(W)$. 
Therefore, $Q(W)$ generates $W$ as an additive group. This proves that $W$ is a near-vector space with respect to the same operations as $V$. The converse is clear.
\end{proof}

We can also assert that any quotient of a near-vector space by an $F$-subspace is a near-vector space. This emerges as a corollary of Corollary \ref{spanlemma}.

\begin{thm}
\label{quotientnvs}
For any $F$-subspace $W$ of $V$, the quotient set $V/W$ is a near-vector space over $F$ with the operations induced from those of $V$. In particular, the quotient map $\pi : V\rightarrow V/W$ is a linear map with kernel $W$.
\end{thm}

\begin{proof}
As usual, we define the addition $\oplus$ and the scalar multiplication $\odot$ on $V/W$ as follows: For any $v, w \in V$, $\alpha \in F$, 
$$(v+W)\oplus ( w + W):= (v+w)+ W$$ 
and 
$$\alpha \odot (v + W) := (\alpha \cdot v) + W.$$ 
It is routine to prove that these operations are well-defined and induce on $V/W$ the structure of an $F$-space.

We need to prove that the scalar group action on $V/W$ is free, which means that if $\alpha \odot (v+W) = \beta \odot (v+W)$, then $\alpha = \beta$ or $v+W = W$. This is equivalent to stating that if $\alpha \cdot v - \beta \cdot v \in W$, then $\alpha = \beta$ or $v \in W$. Let $v \in V$, $\alpha, \beta \in F$ be such that $\alpha \cdot v - \beta \cdot v \in W$ and $\alpha \neq \beta$. Then $\operatorname{Span}(\alpha \cdot v - \beta \cdot v) \subseteq W$ and $v \in \operatorname{Span}(\alpha \cdot  v - \beta \cdot v)$ by Corollary \ref{spanlemma}. Hence, $v \in W$. 

It only remains to prove that $Q(V/W)$ generates $V/W$ as an abelian group. For this, it is enough to show that $\pi(Q(V)) \subseteq Q(V/W)$. Suppose $x \in \pi(Q(V))$. Then $x = \pi(v)$, where $v \in Q(V)$. Since $v \in Q(V)$, for all $\alpha, \beta \in F$, there exists $\gamma \in F$ such that $\alpha \cdot v + \beta \cdot v = \gamma \cdot v$. Because $\pi$ is a linear map, we have
    \begin{equation*}
        \alpha \cdot \pi(v) + \beta \cdot \pi(v) = \pi(\alpha \cdot v) +\pi(\beta \cdot v) = \pi(\alpha\cdot v+ \beta \cdot v) = \pi(\gamma \cdot v) = \gamma \cdot \pi(v).
    \end{equation*}
    As $V \subseteq V/W$, it follows that $\pi(v) \in Q(V/W)$. 
\end{proof}

\begin{remark}
    In the proof of Theorem \ref{quotientnvs}, we show that $\pi(Q(V)) \subseteq Q(V/W)$. This result can be generalized to any linear map. That is, given a linear map $f:V \rightarrow W$, we have $f(Q(V))\subseteq  Q(W)$.
\end{remark}

From the results above, we can deduce the classical correspondence between the kernel of linear maps and $F$-subspaces.  

\begin{cor}
\label{$F$-subspacekernel}
Let $W \subseteq V$. Then $W$ is an $F$-subspace of $V$ if and only if $W$ is the kernel of a linear map $\phi : V \rightarrow V'$.
\end{cor}

We can now conclude this section by stating the First Isomorphism Theorem for near-vector spaces. 

\begin{thm}\label{fit}
Let $f:V \rightarrow V'$ be a linear map. Then $V/\operatorname{Ker}(f) \cong \operatorname{Im}(f)$.
\end{thm}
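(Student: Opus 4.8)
The plan is to follow the classical proof, with the two substantive inputs being the theorems just proved. First I would observe that $\operatorname{Ker}(f)$ is a subspace of $V$ (Definition \ref{ker}), so by Theorem \ref{quotientnvs} the quotient $V/\operatorname{Ker}(f)$ is a near-vector space over $F$ for the induced operations $\oplus$ and $\odot$; likewise $\operatorname{Im}(f)$ is a subspace of $V'$, hence a near-vector space over $F$ by Theorem \ref{nvs}. This is what makes the statement $V/\operatorname{Ker}(f)\cong\operatorname{Im}(f)$ meaningful as an isomorphism of near-vector spaces.

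Next I would write $K=\operatorname{Ker}(f)$ and define the candidate map $\bar f\colon V/K\to\operatorname{Im}(f)$ by $\bar f(v+K)=f(v)$. Well-definedness is immediate: if $v+K=w+K$ then $v-w\in K$, so $f(v)-f(w)=f(v-w)=0$ and $f(v)=f(w)$; and $f(v)\in\operatorname{Im}(f)$ by definition, so the codomain is correct. Linearity of $\bar f$ then reduces to the linearity of $f$ together with the definitions of $\oplus$ and $\odot$ from the proof of Theorem \ref{quotientnvs}: $\bar f((v+K)\oplus(w+K))=f(v+w)=f(v)+f(w)=\bar f(v+K)+\bar f(w+K)$ and $\bar f(\alpha\odot(v+K))=f(\alpha\cdot v)=\alpha\cdot f(v)=\alpha\cdot\bar f(v+K)$ for all $v,w\in V$ and $\alpha\in F$.

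Finally I would check bijectivity. Surjectivity is clear: any element of $\operatorname{Im}(f)$ has the form $f(v)=\bar f(v+K)$ for some $v\in V$. Injectivity: $\bar f(v+K)=0$ means $f(v)=0$, i.e.\ $v\in K$, i.e.\ $v+K=K$, the zero of $V/K$, so $\ker\bar f$ is trivial and $\bar f$ is injective (its kernel is a subspace and equals $\{K\}$). A bijective linear map of near-vector spaces has a linear set-theoretic inverse, since $\bar f^{-1}$ automatically preserves $+$ and scalar multiplication because $\bar f$ does; hence $\bar f$ is an isomorphism of near-vector spaces and $V/\operatorname{Ker}(f)\cong\operatorname{Im}(f)$.

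I do not expect a genuine obstacle here: all the real work—that quotients by subspaces and images of linear maps are again near-vector spaces—has already been carried out in Theorems \ref{quotientnvs} and \ref{nvs}. The only point deserving a word of care is that "isomorphism" must be read in the category of near-vector spaces, so one should note explicitly that the inverse of the bijective morphism $\bar f$ is again a morphism; this is routine.
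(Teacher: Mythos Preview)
Your proposal is correct and follows exactly the same approach as the paper: define $\phi(v+\operatorname{Ker}(f))=f(v)$ and verify it is a well-defined isomorphism of near-vector spaces. In fact you give considerably more detail than the paper, which simply asserts that the verification proceeds ``in the same fashion as we do for vector spaces.''
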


\section{Near-vector spaces as modules over a ring}

In this section, we explore the concept of viewing a near-vector space as a module over a ring.

\begin{definition}
We introduce the notion of a \textbf{scalar group algebra} denoted by $\mathbb{Z}[F]$ as $\mathbb{Z}F/I$, where 
\begin{enumerate} 
\item $\mathbb{Z}F$ represents the semigroup algebra of the semigroup $F$. Specifically, $\mathbb{Z}F$ is a free $\mathbb{Z}$-module generated by the linearly independent set $\{ x_\alpha \}_{\alpha \in F}$. This algebra gains a $\mathbb{Z}$-algebra structure through the multiplication operation on $\mathbb{Z}F$, which is naturally induced by the group multiplication on $F$.
\item $I$ denotes the ideal of $\mathbb{Z}F$ generated by $x_0-0$. 
\end{enumerate}
An element of $\mathbb{Z}[F]$ is denoted as $\sum_{\alpha \in \Lambda}n_{\alpha}\cdot {\bf x}_{\alpha} $, where $\Lambda \finsub F$, $n_{\alpha} \in \mathbb{N}$, and ${\bf x}_{\alpha}$ is the equivalence class of $x_\alpha$ in the quotient $\mathbb{Z}F/I$, with $\alpha \in \Lambda$.
\end{definition}

\begin{remark}
We note that a scalar group algebra is a specific instance of a contracted semigroup ring.
\end{remark}
However, viewing a near-vector space as a module over a ring requires further consideration.

\begin{lemma} \label{module}
Every near-vector space $(V, \mu)$ over the scalar group $F$ naturally possesses a $\mathbb{Z}[F]$-module structure. This structure is defined by the action $\mu_{\mathbb{Z}[F]} : \mathbb{Z}[F] \times V \rightarrow V$, mapping $(\sum_{\alpha \in \Lambda}  n_\alpha \cdot {\bf x}_{\alpha}, v)$ to $\sum_{\alpha \in \Lambda} \mu (\alpha , n_\alpha \cdot v)$, where $\Lambda \finsub F$, and $n_\alpha \in \mathbb{N} $, with $\alpha \in \Lambda$.
\end{lemma}

\begin{remark}
It should be noted that for any $v\in V$, we have
$$\operatorname{Span}(v)= \operatorname{Span}_{\mathbb{Z}[F]}(v)$$ and for all $v \in Q(V)$,  $$\operatorname{Stab}_{\mathbb{Z}[F]}(v)= \left\{  \sum_{\alpha \in \Lambda} n_{\alpha} \cdot {\bf x}_{\alpha}  \mid {}^v \sum_{\alpha \in \Lambda } \big( {}^v\sum_{i=1}^{n_\alpha} \alpha \big)=1, n\in \mathbb{N}, \Lambda \finsub F, \alpha \in F \text{ and } n_{\alpha}\in \mathbb{N}, \forall a \in \Lambda \right\}.$$ 
\end{remark}

However, this perspective doesn't always induce a near-vector space structure over the scalar group $F$, unless certain conditions are met, such as having a scalar basis. To elaborate, consider the following lemma:

\begin{lemma} 
If $V$ is a $\mathbb{Z}[F]$-module, then it is possible to regard $V$ as an $F$-space. This is achieved through the scalar group action $\mu: F \times V \rightarrow V$, defined as $(\alpha, v) \mapsto \mu_{\mathbb{Z}[F]}({\bf x}_\alpha,v)$. 
\end{lemma} 

Let $\mu_{\mathbb{Z}[F]}$ be the module action induced by the scalar group action on the near-vector space $V$ as stated in Lemma \ref{module}.
We denote $\mathcal{L} = \{  F\cdot v \mid  v \in V \} $ as the set of "lines" passing through the origin. The module action $\mu_{\mathbb{Z}[F]}$ induces an action of $\mathbb{Z} [ F]$ on $\mathcal{L}$ as follows:
$$\mu_{\mathcal{L}} \left(\sum_{\alpha \in \Lambda} n_{\alpha} \cdot {\bf x}_{\alpha}, F\cdot v\right) :=   F\cdot \mu_{\mathbb{Z}[F]} \left( \sum_{\alpha \in \Lambda} n_{\alpha}\cdot {\bf x}_{\alpha},v\right),$$
where $\Lambda \finsub F$, and $n_\alpha\in \mathbb{N}$ for all $\alpha \in \Lambda$, and $v \in V$. 

This insight reveals that the elements within the quasi-kernel are capable of generating lines that maintain their linearity even when subjected to the $\mathbb{Z}[F]$-action $\mu_{\mathcal{L}}$ induced by elements that transform a non-zero vector $v$ into a non-zero element. Essentially, under this specific action of $\mathbb{Z}[F]$, the dimension of $v$ remains constant. These scalar elements serve as a robust basis for constructing a conventional geometric coordinate system. For instance, in Remark \ref{exa}, (2), the basis $\{ (1, 0,0), (0,1,0), (0,0,1)\}$ satisfies this criterion.

\end{document}